\documentclass[a4paper,10pt]{article}
\usepackage{test,bm}
\usepackage[dvips]{hyperref}

\title{Weak Limit of the Geometric Sum of Independent But Not Identically Distributed Random Variables}
\author{Alexis Akira Toda\thanks{Department of Economics, Yale University, 28 Hillhouse Avenue, New Haven, CT 06511, USA. Email: alexisakira.toda@yale.edu.  This paper benefited from comments by Tomasz Kozubowski and Kenichiro Tanaka.  The financial supports from the Cowles Foundation, the Nakajima Foundation, and Yale University are greatly acknowledged.}}

\numberwithin{equation}{section}
\numberwithin{thm}{section}
\newcommand{\AL}{\mathcal{AL}}
\renewcommand{\L}{\mathcal{L}}
\renewcommand{\vec}[1]{\bm{#1}}
\renewcommand{\Re}{\operatorname{Re}}

\begin{document}
\maketitle

\begin{abstract}
We show that when $\set{X_j}$ is a sequence of independent (but not necessarily identically distributed) random variables which satisfies a condition similar to the Lindeberg condition, the properly normalized geometric sum $\sum_{j=1}^{\nu_p}X_j$ (where $\nu_p$ is a geometric random variable with mean $1/p$) converges in distribution to a Laplace distribution as $p\to 0$.  The same conclusion holds for the multivariate case.  This theorem provides a reason for the ubiquity of the double power law in economic and financial data.
\end{abstract}



\section{Introduction}
Let $\set{X_j}$ be a sequence of independent (but not necessarily identically distributed) random variables and $\nu_p$ be a geometric random variable with mean $1/p$ independent of $X_j$'s.  The geometric sum
\begin{equation}
\sum_{j=1}^{\nu_p}X_j\label{eq:intro.1}
\end{equation}
naturally arises in diverse fields \cite{kalashnikov1997}, particularly in economics.  For example, let $W_j$ be the financial wealth of a typical individual at age $j$ and suppose that the wealth grows in a multiplicative way according to $W_{j+1}=G_jW_j$, where $G_j$ is the growth rate which is a random variable.  Assuming that each individual dies with constant probability $p$ at each period (and a new individual is born), what does the cross-sectional distribution of wealth look like?  To answer this question, let $\nu_p$ be a geometric random variable that represents the age of the individual.  Letting $X_j=\log G_{j-1}$, the log wealth is
$$\log W_{\nu_p}=\log W_0+\sum_{j=0}^{\nu_p-1}\log G_j=\log W_0+\sum_{j=1}^{\nu_p}X_j,$$
a geometric sum.  It might be plausible to assume that $\set{X_j}$ is independent conditional on the realization of macro variables (GDP, interest rate, stock market returns, etc.), but since every individual is more or less affected by the state of the macroeconomy, it is not plausible to assume that $\set{X_j}$ (a time series) is identically distributed conditional on macro variables.  In that case the determination of the cross-sectional distribution of log wealth, or the weak limit of the geometric sum \eqref{eq:intro.1}, becomes a non-trivial problem.

The weak limit of the properly normalized random sum \eqref{eq:intro.1} (where $\nu_p$ is not necessarily a geometric random variable but a general integer-valued random variable) has been studied by a number of authors (see \cite{gnedenko-korolev1996} and the references therein).  In particular, when $\nu_p$ is a geometric random variable and $X_j$ has a finite variance, the weak limit of the properly normalized geometric sum \eqref{eq:intro.1} is a Laplace distribution \cite{kozubowski-rachev1999-multivariate,kozubowski-rachev1999-univariate}, which has been applied to modeling financial data \cite{mittnik-rachev1993,kozubowski-rachev1994,kozubowski-podgorski2001}.  However, the literature on the asymptotic distribution of geometric sums seems to be limited to the i.i.d.\ case.  \cite{szasz1972inid} and \cite{gnedenko-korolev1996} consider the asymptotic distribution of the random sum of independent but not identically distributed random variables and provide necessary and sufficient conditions for convergence, but since they do not provide explicit examples on geometric sums, it is not obvious whether their general theory applies to the specific case of geometric sums.  In this paper by using a technique similar to the proof of the Lindeberg-Feller central limit theorem, I show that the results for the geometric sum of i.i.d.\ random variables extend to the case when the random variables are independent but not identically distributed (i.n.i.d.).

Before proceeding to the main result we introduce some notations.  A random variable $X$ is said to be Laplace if it has a probability density function of the form
$$f(x)=\begin{cases}
\frac{\alpha\beta}{\alpha+\beta}\e^{-\alpha\abs{x-m}},&(x\ge m)\\
\frac{\alpha\beta}{\alpha+\beta}\e^{-\beta\abs{x-m}},&(x<m)
\end{cases}$$
where $m$ is the mode and $\alpha,\beta>0$ are shape parameters.  If $\alpha\neq \beta$, $X$ is said to be asymmetric Laplace.  The characteristic function of $X$ is
\begin{align*}
\phi_X(t)&=\int_{-\infty}^m\e^{itx}\frac{\alpha\beta}{\alpha+\beta}\e^{-\beta\abs{x-m}}\diff x+\int_m^\infty \e^{itx}\frac{\alpha\beta}{\alpha+\beta}\e^{-\alpha\abs{x-m}}\diff x\\
&=\frac{\e^{imt}}{1-i(\frac{1}{\alpha}-\frac{1}{\beta})t+\frac{t^2}{\alpha\beta}},
\end{align*}
from which we obtain the mean $m+\frac{1}{\alpha}-\frac{1}{\beta}$ and the variance $\frac{1}{\alpha^2}+\frac{1}{\beta^2}$.  It is often useful to parameterize the Laplace distribution in terms of its characteristic function.  Let $a=\frac{1}{\alpha}-\frac{1}{\beta}$ be an asymmetry parameter and $\sigma=\sqrt{\frac{2}{\alpha\beta}}$ be a scale parameter.  Then we write $X\sim \AL(m,a,\sigma)$ if
$$\phi_X(t)=\frac{\e^{imt}}{1-iat+\frac{\sigma^2t^2}{2}}.$$
The mean, mode, and variance of $\AL(m,a,\sigma)$ is $m+a$, $m$, and $a^2+\sigma^2$, respectively.  In particular, setting $\alpha=\beta=\frac{\sqrt{2}}{\sigma}$, the symmetric Laplace distribution with mean and mode $m$ and standard deviation $\sigma$ (which we denote by $\L(m,\sigma)$) has density $f(x)=\frac{1}{\sqrt{2}\sigma}\e^{-\frac{\sqrt{2}\abs{x-m}}{\sigma}}$ and characteristic function $\frac{\e^{imt}}{1+\frac{\sigma^2t^2}{2}}$.
A comprehensive review of the Laplace distribution can be found in \cite{kotz-kozubowski-podgorski2001}.

\section{Main result}\label{sec:main}
\begin{thm}\label{thm:1}
Let $\set{X_j}$ be a sequence of independent but not identically distributed (i.n.i.d) random variables such that $\E[X_j]=0$ and $\var[X_j]=\sigma_j^2$, $\set{a_j}$ be a real sequence, and $\nu_p$ be a geometric random variable independent of $X_j$'s with mean $1/p$.  Suppose that
\begin{enumerate}
\item $\lim\limits_{n\to\infty}n^{-\alpha}\sigma_n^2=0$ for some $0<\alpha<1$ and $\sigma^2:=\lim\limits_{n\to \infty}\frac{1}{n}\sum_{j=1}^n\sigma_j^2>0$ exists,
\item $a:=\lim\limits_{n\to\infty}\frac{1}{n}\sum_{j=1}^n a_j$ exists, and
\item for all $\epsilon>0$ we have
\begin{equation}
\lim_{p\to 0}\sum_{j=1}^\infty (1-p)^{j-1}p\E\left[X_j^2\set{\abs{X_j}\ge \epsilon p^{-\frac{1}{2}}}\right]=0.\label{eq:main.1}
\end{equation}
\end{enumerate}
Then, as $p\to 0$ the geometric sum $p^\frac{1}{2}\sum_{j=1}^{\nu_p}(X_j+p^\frac{1}{2}a_j)$ converges in distribution to $\AL(0,a,\sigma)$.
\end{thm}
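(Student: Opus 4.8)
The plan is to work with characteristic functions and Lévy's continuity theorem, aiming at the Laplace characteristic function $\phi_{\AL(0,a,\sigma)}(t)=\frac{1}{1-iat+\sigma^2t^2/2}$ from the introduction. Write $\phi_j$ for the characteristic function of $X_j$ and set $S_p:=p^{1/2}\sum_{j=1}^{\nu_p}(X_j+p^{1/2}a_j)$. Conditioning on $\nu_p=n$, using the independence of the $X_j$ and of $\nu_p$, and inserting the geometric weights $(1-p)^{n-1}p$, I first obtain
\begin{equation*}
\phi_{S_p}(t)=p\sum_{n=1}^\infty(1-p)^{n-1}\prod_{j=1}^n\psi_j,\qquad \psi_j:=\e^{ipta_j}\phi_j(tp^{1/2}).
\end{equation*}
The decisive algebraic step is to telescope the partial products $P_n:=\prod_{j=1}^n\psi_j$ through $P_n=1+\sum_{k=1}^n P_{k-1}(\psi_k-1)$ and interchange the two summations; the prefactor $p$ cancels against the inner geometric tail $\sum_{n\ge k}(1-p)^{n-1}=(1-p)^{k-1}/p$, leaving the exact identity
\begin{equation*}
\phi_{S_p}(t)=1+\sum_{k=1}^\infty(1-p)^{k-1}P_{k-1}(\psi_k-1).
\end{equation*}

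Next I expand each factor. Since $\E[X_j]=0$ and $\var[X_j]=\sigma_j^2$, I write $\psi_k-1=\chi_k+\rho_k$ with $\chi_k:=p\gamma_k$, $\gamma_k:=ita_k-\tfrac{t^2}{2}\sigma_k^2$, and a remainder $\rho_k$ whose dominant part is $\phi_k(tp^{1/2})-1+\tfrac{t^2p}{2}\sigma_k^2$ (the remaining cross terms are of lower order and are handled the same way). Here the technique of the Lindeberg--Feller theorem enters: splitting the expectation that defines this remainder according to whether $\abs{X_k}<\epsilon p^{-1/2}$ or $\abs{X_k}\ge\epsilon p^{-1/2}$ and using the elementary bound on $\e^{isx}-1-isx+\tfrac{s^2x^2}{2}$, I would get
\begin{equation*}
\sum_{k=1}^\infty(1-p)^{k-1}\abs{\rho_k}\le\frac{\abs{t}^3\epsilon}{6}\,\E[\sigma_{\nu_p}^2]+t^2\sum_{k=1}^\infty(1-p)^{k-1}p\,\E\!\left[X_k^2\set{\abs{X_k}\ge\epsilon p^{-1/2}}\right]+o(1),
\end{equation*}
where $\E[\sigma_{\nu_p}^2]=p\sum_{k}(1-p)^{k-1}\sigma_k^2\to\sigma^2$ by the Cesàro part of condition 1 (an Abelian theorem). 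The second sum vanishes as $p\to0$ by \eqref{eq:main.1}, and since $\epsilon>0$ is arbitrary the entire remainder contribution is $o(1)$; the crude bound $\abs{P_{k-1}}\le1$ suffices throughout.

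It then remains to treat the main term $\sum_k(1-p)^{k-1}P_{k-1}\chi_k=p\sum_k(1-p)^{k-1}P_{k-1}\gamma_k$. Combining the elementary identity $p\sum_{k}(1-p)^{k-1}P_{k-1}=p+(1-p)\phi_{S_p}(t)$ with the key reduction
\begin{equation*}
p\sum_{k=1}^\infty(1-p)^{k-1}P_{k-1}(\gamma_k-w)\to0\quad\text{as }p\to0,\qquad w:=ita-\tfrac{\sigma^2t^2}{2},
\end{equation*}
which I label $(\star)$, converts the identity of the previous paragraph into $\phi_{S_p}(t)=1+w\bigl(p+(1-p)\phi_{S_p}(t)\bigr)+o(1)$. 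Solving this linear relation for $\phi_{S_p}(t)$ gives $\phi_{S_p}(t)=\frac{1+wp+o(1)}{1-w+wp}\to\frac{1}{1-w}=\phi_{\AL(0,a,\sigma)}(t)$, and Lévy's continuity theorem yields the claimed convergence in distribution to $\AL(0,a,\sigma)$.

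The crux, and the step I expect to be the main obstacle, is $(\star)$. My plan is to sum by parts against the partial sums $\Delta_k:=\sum_{j=1}^k(\gamma_j-w)$, which satisfy $\Delta_k=o(k)$ precisely because of the Cesàro hypotheses in conditions 1 and 2. This rewrites the left-hand side of $(\star)$ as $-p\sum_k\Delta_k(1-p)^{k-1}P_{k-1}\bigl[(1-p)\psi_k-1\bigr]$, where $\abs{(1-p)\psi_k-1}\le p\bigl(1+\abs{t}\abs{a_k}+\tfrac{t^2}{2}\sigma_k^2\bigr)$. The tempting estimate $\sigma_k^2=o(k^\alpha)$ from condition 1 is far too lossy here and would force $\alpha<\tfrac12$; the correct move is to control the resulting weighted sums, such as $p^2\sum_k k\,\sigma_k^2(1-p)^{k-1}$, not term by term but through the \emph{average} behavior $\sum_{j=1}^n\sigma_j^2\sim n\sigma^2$ furnished by the Cesàro hypothesis, which keeps them bounded of order $\sigma^2$; the extra factor $\Delta_k=o(k)$ then supplies the decay to $0$. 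In this reading the growth bound $\sigma_n^2=o(n^\alpha)$ with $\alpha<1$ plays the role of uniform asymptotic negligibility, $\max_{1\le k\le\lfloor 1/p\rfloor}p\,\sigma_k^2\to0$, of the normalized summands $p^{1/2}X_k$, which is what legitimizes the Taylor expansions and the vanishing of the Abel boundary term $(1-p)^{k-1}P_{k-1}\Delta_k$. Balancing the smallness of $\Delta_k=o(k)$ against sums whose mass concentrates near $k\sim1/p$ is the genuinely delicate part of the argument.
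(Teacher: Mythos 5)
Your overall strategy---writing the characteristic function of the geometric sum as $\phi_{S_p}(t)=1+\sum_{k}(1-p)^{k-1}P_{k-1}(\psi_k-1)$ via telescoping, expanding $\psi_k-1$ to second order, and closing the fixed-point relation $\phi_{S_p}=1+w\bigl(p+(1-p)\phi_{S_p}\bigr)+o(1)$---is a legitimate and genuinely different route from the paper's. The paper instead replaces the $X_j$ by Gaussian surrogates $Y_j$ with the same variances (Lindeberg's swapping argument, Proposition \ref{prop:4}), computes the Gaussian case in closed form (Proposition \ref{prop:2}), and reduces the limit to the Abel-type Lemma \ref{lem:app.3}, in which only the partial sums $b_n=\sum_{j\le n}a_j$ and $\tau_n^2=\sum_{j\le n}\sigma_j^2$ ever appear. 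That structural difference is precisely where your argument has a genuine gap: your decomposition forces you to control the \emph{individual} $a_k$ and $\sigma_k^2$, whereas the hypotheses only control their Ces\`aro averages.

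Concretely, condition 2 permits $a_k=(-1)^k k^{0.9}$ (then $\frac1n\sum_{j\le n}a_j\to 0$). For such a sequence the piece of $\rho_k$ coming from $\e^{ipta_k}-1-ipta_k$ has modulus comparable to $p^2t^2a_k^2/2=p^2t^2k^{1.8}/2$ for $k$ of order $1/p$, so the contribution of these terms to $\sum_k(1-p)^{k-1}\abs{\rho_k}$ is of order $t^2p^{-0.8}\to\infty$: they are not ``of lower order,'' and the bound $\abs{P_{k-1}}\le 1$ plus the triangle inequality cannot absorb them (the cross term $(\e^{ipta_k}-1)(\phi_k(tp^{1/2})-1)$ fails similarly). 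The same obstruction reappears inside your step $(\star)$: after summation by parts you must bound $p^2\sum_k(1-p)^{k-1}\abs{\Delta_k}\,\abs{a_k}$ with $\abs{\Delta_k}=o(k)$ and $\abs{a_k}$ only $o(k)$, which again diverges like $p^{-0.8}$ in the example above. (The variance side of $(\star)$ is sound: $p^2\sum_k(1-p)^{k-1}\abs{\Delta_k}\sigma_k^2$ can be handled with a second summation by parts using only $\sum_{j\le n}\sigma_j^2=O(n)$; and you correctly flag $(\star)$ as unfinished rather than claiming it.) The gap is repairable: peel off the drift first by writing $S_p=p^{1/2}\sum_{j\le\nu_p}X_j+p b_{\nu_p}$ and noting $p b_{\nu_p}-pa\nu_p\to 0$ in probability (so one may take $a_j\equiv a$ and conclude by Slutsky), or keep the drift aggregated as $\e^{itpb_n}$ inside the sum over $n$, as the paper does. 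As written, however, the claim that the $a_k$ remainder terms are negligible is false under the stated hypotheses, and the step you yourself identify as the crux is not established.
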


By strengthening the assumptions of Theorem \ref{thm:1}, we obtain the following corollaries.

\begin{cor}\label{cor:main.2}
Let $\set{X_j}$ be a sequence of independent but not identically distributed (i.n.i.d) random variables such that $\E[X_j]=0$, $\var[X_j]=\sigma_j^2$, and $\sigma^2:=\lim\limits_{n\to \infty}\frac{1}{n}\sum_{j=1}^n\sigma_j^2>0$ exists.  Let $\set{a_j}$ be a real sequence such that $a:=\lim\limits_{n\to\infty}\frac{1}{n}\sum_{j=1}^n a_j$ exists, and $\nu_p$ be a geometric random variable independent of $X_j$'s with mean $1/p$.  Suppose that $\set{X_j^2}$ is uniformly integrable.  Then $p^\frac{1}{2}\sum_{j=1}^{\nu_p}(X_j+p^\frac{1}{2}a_j)\dto \AL(0,a,\sigma)$ as $p\to 0$.
\end{cor}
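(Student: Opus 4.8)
The plan is to deduce Corollary~\ref{cor:main.2} from Theorem~\ref{thm:1} by verifying that uniform integrability of $\set{X_j^2}$, together with the stated existence of the averages $a$ and $\sigma^2$, implies the three hypotheses of the theorem. Condition~2 and the existence of $\sigma^2$ in Condition~1 are assumed outright, so only the growth bound $n^{-\alpha}\sigma_n^2\to 0$ and the Lindeberg-type condition \eqref{eq:main.1} remain to be checked.

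For the growth bound I would first recall that a uniformly integrable family is bounded in $L^1$: since $\set{X_j^2}$ is uniformly integrable, $\sup_j\sigma_j^2=\sup_j\E[X_j^2]<\infty$. Consequently $n^{-\alpha}\sigma_n^2\to 0$ for \emph{every} $\alpha\in(0,1)$, so the remaining part of Condition~1 holds trivially.

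The substantive step is \eqref{eq:main.1}. I would introduce the uniform tail function $G(K):=\sup_j\E\left[X_j^2\set{\abs{X_j}\ge K}\right]$, which is non-increasing in $K$ and, by the definition of uniform integrability, satisfies $G(K)\to 0$ as $K\to\infty$ (noting that $\set{\abs{X_j}\ge K}=\set{X_j^2\ge K^2}$, so the two standard formulations of uniform integrability agree). Bounding each summand in \eqref{eq:main.1} by $G(\epsilon p^{-1/2})$ and pulling this factor out of the series, the key observation is that the geometric weights sum to one, $\sum_{j=1}^\infty(1-p)^{j-1}p=1$, so the entire series is dominated by $G(\epsilon p^{-1/2})$. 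Since $\epsilon p^{-1/2}\to\infty$ as $p\to 0$, this bound tends to $0$, establishing \eqref{eq:main.1}. With all three hypotheses verified, Theorem~\ref{thm:1} yields convergence to $\AL(0,a,\sigma)$.

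I expect no serious obstacle; the argument is essentially a reduction. The only point requiring care is that the tail bound $G(\epsilon p^{-1/2})$ must be independent of the summation index $j$, which is precisely what \emph{uniform} (as opposed to merely pointwise) integrability guarantees—this is what allows the bound to survive the $p$-weighted averaging and produce a single vanishing envelope for the whole series.
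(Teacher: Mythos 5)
Your proof is correct and follows essentially the same route as the paper: both arguments use the uniform tail function $\sup_j\E[X_j^2\set{\abs{X_j}\ge c}]$ to show boundedness of $\set{\sigma_j^2}$ (hence $n^{-\alpha}\sigma_n^2\to 0$) and to dominate the geometrically weighted series in \eqref{eq:main.1} by a single vanishing envelope. Your direct substitution $c=\epsilon p^{-1/2}$ is just a streamlined version of the paper's two-step limit ($p\to 0$, then $c\to\infty$).
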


\begin{proof}
For $c>0$ let $M(c)=\sup_j \E[X_j^2\set{\abs{X_j}\ge c}]$.  Since $\set{X_j^2}$ is uniformly integrable, we have $M(c)\to 0$ as $c\to \infty$, so $M(c)<\infty$ for sufficiently large $c$.  For such $c$, we have
$$\sigma_j^2=\E[X_j^2\set{\abs{X_j}< c}]+\E[X_j^2\set{\abs{X_j}\ge c}]\le c^2+M(c),$$
so $\set{\sigma_j}$ is bounded, in particular $n^{-\alpha}\sigma_n^2\to 0$ for any $0<\alpha<1$.  For any $\epsilon>0$ and $c>0$ choose $p$ such that $\epsilon p^{-\frac{1}{2}}\ge c$.  Then
\begin{align*}
\sum_{j=1}^\infty (1-p)^{j-1}p\E\left[X_j^2\set{\abs{X_j}\ge \epsilon p^{-\frac{1}{2}}}\right]&\le \sum_{j=1}^\infty (1-p)^{j-1}p\E\left[X_j^2\set{\abs{X_j}\ge c}\right]\\
&\le M(c),
\end{align*}
so letting $p\to 0$ and then $c\to\infty$, condition \eqref{eq:main.1} holds.
\end{proof}

\begin{cor}\label{cor:main.3}
Let $\set{X_j}$ be a sequence of i.i.d.\ random variables with mean 0 and variance $\sigma^2$, $\set{a_j}$ a real sequence such that $a:=\lim\limits_{n\to\infty}\frac{1}{n}\sum_{j=1}^n a_j$ exists, and $\nu_p$ a geometric random variable independent of $X_j$'s with mean $1/p$.  Then $p^\frac{1}{2}\sum_{j=1}^{\nu_p}(X_j+p^\frac{1}{2}a_j)\dto \AL(0,a,\sigma)$ as $p\to 0$.
\end{cor}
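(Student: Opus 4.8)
The plan is to obtain Corollary \ref{cor:main.3} as an immediate specialization of Corollary \ref{cor:main.2}, so that no new analytic work is required beyond checking hypotheses. Since the $X_j$ are identically distributed we have $\sigma_j^2=\sigma^2$ for every $j$, hence $\frac{1}{n}\sum_{j=1}^n\sigma_j^2=\sigma^2$ for all $n$, so the limit $\sigma^2:=\lim_{n\to\infty}\frac{1}{n}\sum_{j=1}^n\sigma_j^2$ exists and equals $\sigma^2$ (we take $\sigma>0$, so that $\AL(0,a,\sigma)$ is non-degenerate and Corollary \ref{cor:main.2} applies). The hypotheses on $\set{a_j}$ and on the geometric random variable $\nu_p$ are identical to those of Corollary \ref{cor:main.2} and are assumed outright. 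Thus the only condition of Corollary \ref{cor:main.2} that is not immediate is the uniform integrability of $\set{X_j^2}$.

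First I would verify this uniform integrability. Because the $X_j$ are identically distributed, each $X_j^2$ has the same law as $X_1^2$, and $\E[X_1^2]=\var[X_1]=\sigma^2<\infty$ since the common mean is $0$. Consequently, for every $c>0$,
$$M(c):=\sup_j\E\left[X_j^2\set{\abs{X_j}\ge c}\right]=\E\left[X_1^2\set{\abs{X_1}\ge c}\right],$$
and the right-hand side tends to $0$ as $c\to\infty$ by dominated convergence, with the integrable random variable $X_1^2$ serving as the dominating function. Hence $M(c)\to 0$, which is exactly the statement that $\set{X_j^2}$ is uniformly integrable.

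Having checked every hypothesis, I would then invoke Corollary \ref{cor:main.2} to conclude that $p^{\frac{1}{2}}\sum_{j=1}^{\nu_p}(X_j+p^{\frac{1}{2}}a_j)\dto\AL(0,a,\sigma)$ as $p\to 0$. I expect no genuine obstacle here: the result is a clean specialization, and the only point requiring a moment's thought is the observation that a family of identically distributed square-integrable random variables is automatically uniformly integrable---a one-line dominated-convergence argument---after which all the real work has already been carried out in Theorem \ref{thm:1} and Corollary \ref{cor:main.2}.
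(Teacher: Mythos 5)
Your proposal is correct and follows essentially the same route as the paper: reduce to Corollary \ref{cor:main.2} by noting that $\sigma_j^2=\sigma^2$ for all $j$ (so the Ces\`aro limit exists) and that identically distributed square-integrable variables are automatically uniformly integrable; the paper states this in one line, while you supply the short dominated-convergence justification. The only implicit point, which the paper also glosses over, is that $\sigma^2>0$ must be assumed for Corollary \ref{cor:main.2} to apply, as you correctly note.
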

\begin{proof}
Since $X_j$'s are i.i.d., $\lim\limits_{n\to \infty}\frac{1}{n}\sum_{j=1}^n\sigma_j^2 =\sigma^2>0$ and $\set{X_j^2}$ is uniformly integrable.  Hence the conclusion holds by Corollary \ref{cor:main.2}.
\end{proof}

The proof of Theorem \ref{thm:1} is based on the idea of Lindeberg \cite{lindeberg1922} for proving the central limit theorem.  We first prove Theorem \ref{thm:1} when $X_j$'s are Gaussian.  Then we take a sequence of independent zero mean Gaussian variables $\set{Y_j}$ with the same variances as $\set{X_j}$ and show that the geometric sums $p^\frac{1}{2}\sum_{j=1}^{\nu_p}(X_j+p^\frac{1}{2}a_j)$ and $p^\frac{1}{2}\sum_{j=1}^{\nu_p}(Y_j+p^\frac{1}{2}a_j)$ admit the same weak limit.

\begin{prop}\label{prop:2}
Let $\set{Y_j}$ be a sequence of independent Gaussian random variables such that $\E[Y_j]=0$, $\var[Y_j]=\sigma_j^2$, and $\sigma^2:=\lim\limits_{n\to \infty}\frac{1}{n}\sum_{j=1}^n\sigma_j^2>0$ exists.  Let $\set{a_j}$ be a real sequence such that $a:=\lim\limits_{n\to\infty}\frac{1}{n}\sum_{j=1}^n a_j$ exists, and $\nu_p$ be a geometric random variable independent of $Y_j$'s with mean $1/p$.  Then $p^\frac{1}{2}\sum_{j=1}^{\nu_p}(Y_j+p^\frac{1}{2}a_j)\dto \AL(0,a,\sigma)$ as $p\to 0$.
\end{prop}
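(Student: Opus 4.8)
The plan is to establish convergence in distribution through the Lévy continuity theorem, by showing that for each fixed $t$ the characteristic function of $S_p := p^\frac{1}{2}\sum_{j=1}^{\nu_p}(Y_j + p^\frac{1}{2}a_j)$ converges to that of $\AL(0,a,\sigma)$, namely $1/(1 - iat + \sigma^2 t^2/2)$. The decisive simplification is that the $Y_j$ are Gaussian. Writing $A_n := \sum_{j=1}^n a_j$ and $B_n := \sum_{j=1}^n \sigma_j^2$, conditional on $\nu_p = n$ the variable $\sum_{j=1}^n Y_j$ is $N(0,B_n)$, so $S_p$ is $N(pA_n, pB_n)$. First I would condition on $\nu_p$ and use $\Pr[\nu_p = n] = (1-p)^{n-1}p$ to obtain the explicit series
\[ \phi_{S_p}(t) = p\sum_{n=1}^\infty (1-p)^{n-1}\exp\!\Bigl(itpA_n - \tfrac{t^2}{2}pB_n\Bigr). \]

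Next I would compare this with the ideal series obtained by replacing $A_n$ with $na$ and $B_n$ with $n\sigma^2$, which is exactly the characteristic function one would get in the i.i.d.\ case. Setting $\lambda := ita - \tfrac{t^2\sigma^2}{2}$ and $w := \e^{p\lambda}$, this ideal series telescopes as a geometric series (convergent since $\abs{(1-p)w}<1$) to $pw/(1 - (1-p)w)$, and a routine expansion in powers of $p$ using $w = 1 + p\lambda + O(p^2)$ shows it converges to $1/(1-\lambda) = 1/(1 - iat + \sigma^2 t^2/2)$, the desired limit. This step presents no difficulty.

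The heart of the argument---and the step I expect to be the main obstacle---is to show that the difference between $\phi_{S_p}(t)$ and the ideal series vanishes as $p\to 0$. Put $r_n := A_n - na$ and $s_n := B_n - n\sigma^2$; the Cesàro hypotheses give $r_n/n \to 0$ and $s_n/n \to 0$. Since $B_n \ge 0$ and $n\sigma^2 \ge 0$, both exponents have nonpositive real part, so the elementary bound $\abs{\e^x - \e^y} \le \abs{x-y}$ (valid when $\Re x, \Re y \le 0$) applies termwise, and because the two exponents differ by $p(it\,r_n - \tfrac{t^2}{2}s_n)$ the difference is bounded by
\[ p^2 \sum_{n=1}^\infty (1-p)^{n-1}\Bigl(\abs{t}\,\abs{r_n} + \tfrac{t^2}{2}\abs{s_n}\Bigr). \]
The key device is the identity $\sum_{n=1}^\infty n(1-p)^{n-1} = p^{-2}$. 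Given $\delta > 0$, I would choose $N$ so that $\abs{r_n}, \abs{s_n} < \delta n$ for $n > N$ and split each sum at $N$: the finite head is $O(p^2)$ for fixed $N$ and hence vanishes as $p\to 0$, while the tail is at most $\delta p^2 \sum_{n=1}^\infty n(1-p)^{n-1} = \delta$. Letting $p\to 0$ and then $\delta \to 0$ forces the difference to zero, and combining with the previous paragraph yields $\phi_{S_p}(t) \to 1/(1 - iat + \sigma^2 t^2/2)$ for every $t$, which completes the proof.
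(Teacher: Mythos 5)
Your proof is correct, and its skeleton (condition on $\nu_p$, write the characteristic function as the series $p\sum_{n\ge 1}(1-p)^{n-1}\exp(itpA_n-\tfrac{t^2}{2}pB_n)$, pass to the limit, invoke L\'evy continuity) is the same as the paper's. Where you diverge is in how the limit of the series is established. The paper outsources this to a standalone lemma (Lemma \ref{lem:app.3}): for any complex sequence $z_n\to z$ with $\Re z>-1$, $\sum_{n\ge 1}(1-p)^{n-1}p\,\e^{-pnz_n}\to 1/(1+z)$, proved via the bound $\abs{\e^z-1}\le\abs{z}\e^{\abs{z}}$, the inequality $0<(1-p)\e^{-px}<1$ for $x\ge -1$, and l'H\^opital's rule. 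You instead exploit the specific structure of the Gaussian case --- both exponents have nonpositive real part because $B_n\ge 0$ and $\sigma^2\ge 0$ --- to get the cleaner Lipschitz bound $\abs{\e^x-\e^y}\le\abs{x-y}$ termwise, and then close the argument with the identity $\sum_{n\ge 1}n(1-p)^{n-1}=p^{-2}$ and a head/tail split driven by the Ces\`aro hypotheses $r_n/n\to 0$, $s_n/n\to 0$. Your route is more elementary and self-contained (no l'H\^opital, no exponential-growth factor $\e^{\abs{z}}$ to control), at the cost of being less general: the paper's lemma covers the regime $-1<\Re z<0$, which is irrelevant here since $\Re z=\sigma^2t^2/2\ge 0$, but is the form in which the statement would be reusable elsewhere. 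All the delicate points in your version check out: the segment from $x$ to $y$ stays in the closed left half-plane so the Lipschitz bound is legitimate, $\abs{(1-p)w}\le 1-p<1$ guarantees convergence of the geometric series, and $1-\lambda=1-iat+\sigma^2t^2/2$ has real part at least $1$ so the limit is well defined.
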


\begin{proof}
Let $S_n=\sum_{j=1}^n Y_n$, $b_n=\sum_{j=1}^n a_n$, and $\tau_n^2=\sum_{j=1}^n \sigma_n^2$.  Since $Y_j$'s are independent Gaussian, $S_n\sim N(0,\tau_n^2)$.  By conditioning on $\nu_p$ the characteristic function of the geometric sum $Z_p:=p^\frac{1}{2}\sum_{j=1}^{\nu_p}(Y_j+p^\frac{1}{2}a_j)$ is
\begin{align*}
\phi_p(t)&:=\E[\e^{it Z_p}]=\E[\E[\e^{it Z_p}|\nu_p]]=\sum_{n=1}^\infty (1-p)^{n-1}p \E[\e^{it p^\frac{1}{2}(S_n+p^\frac{1}{2}b_n)} ]\\
&=\sum_{n=1}^\infty (1-p)^{n-1}p \e^{itpb_n-\frac{pt^2\tau_n^2}{2}}=\sum_{n=1}^\infty (1-p)^{n-1}p \e^{-pn (-it\frac{b_n}{n}+\frac{t^2\tau_n^2}{2n})}.
\end{align*}
Let $z_n=-it\frac{b_n}{n}+\frac{t^2\tau_n^2}{2n}$.  By assumption, $z_n\to z:=-iat+ \frac{\sigma^2t^2}{2}$ as $n\to\infty$.  Since $\Re z=\frac{\sigma^2t^2}{2}\ge 0>-1$, by Lemma \ref{lem:app.3}, we obtain
$$\lim_{p\to 0}\phi_p(t)=\frac{1}{1+z}=\frac{1}{1-iat+\frac{\sigma^2t^2}{2}}.$$
Hence $Z_p\dto \AL(0,a,\sigma)$ as $p\to 0$.
\end{proof}

Next we show that condition \eqref{eq:main.1} holds for $\set{Y_j}$.
\begin{lem}\label{lem:3}
Let everything be as in Theorem \ref{thm:1} and $\set{Y_j}$ be as in Proposition \ref{prop:2}.  Then condition \eqref{eq:main.1} holds for $\set{Y_j}$.
\end{lem}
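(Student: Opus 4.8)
The plan is to exploit the Gaussian structure of $\set{Y_j}$ through a Lyapunov-type moment bound, which is far stronger than the mere second moments available for general $\set{X_j}$. Since $0<\alpha<1$, I first fix an exponent $\delta>\frac{2\alpha}{1-\alpha}$, so that $\frac{\delta}{2}>\frac{\alpha(2+\delta)}{2}$. The underlying idea is that a sufficiently high moment of the Gaussian tail decays fast enough to beat the polynomial growth of the variances $\sigma_j^2$, once everything is averaged against the geometric weights.

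First I would apply Markov's inequality in the form $Y_j^2\set{\abs{Y_j}\ge c}\le c^{-\delta}\abs{Y_j}^{2+\delta}$, valid on the event $\set{\abs{Y_j}\ge c}$. Taking $c=\epsilon p^{-\frac{1}{2}}$ and using that $Y_j\sim N(0,\sigma_j^2)$ has $\E[\abs{Y_j}^{2+\delta}]=C_\delta\sigma_j^{2+\delta}$ with $C_\delta:=\E[\abs{Z}^{2+\delta}]<\infty$ for $Z\sim N(0,1)$, I obtain
\begin{equation*}
\E\!\left[Y_j^2\set{\abs{Y_j}\ge \epsilon p^{-\frac{1}{2}}}\right]\le C_\delta\,\epsilon^{-\delta}\,p^{\frac{\delta}{2}}\,\sigma_j^{2+\delta}.
\end{equation*}
Next I would convert the hypothesis $n^{-\alpha}\sigma_n^2\to 0$ into the uniform polynomial envelope $\sigma_j^2\le C_0(1+j^\alpha)$, and hence $\sigma_j^{2+\delta}\le C_1(1+j^{\beta})$ with $\beta:=\frac{\alpha(2+\delta)}{2}$. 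Substituting this and summing against the geometric weights reduces the problem to controlling $\sum_{j=1}^\infty(1-p)^{j-1}p\,j^{\beta}=\E[\nu_p^\beta]$. Since $p\nu_p$ converges in distribution to an exponential random variable of unit mean, with the associated moments converging, one has $\E[\nu_p^\beta]\le C_\beta p^{-\beta}$ for small $p$. Assembling the pieces,
\begin{equation*}
\sum_{j=1}^\infty(1-p)^{j-1}p\,\E\!\left[Y_j^2\set{\abs{Y_j}\ge \epsilon p^{-\frac{1}{2}}}\right]\le C\,\epsilon^{-\delta}\,p^{\frac{\delta}{2}-\beta}=C\,\epsilon^{-\delta}\,p^{\frac{\delta(1-\alpha)-2\alpha}{2}},
\end{equation*}
and the exponent is strictly positive by the choice of $\delta$, so the right-hand side vanishes as $p\to 0$.

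The substantive step --- and the only place where the strict inequality $\alpha<1$ is genuinely used --- is balancing the exponents: the Gaussian $(2+\delta)$-moment supplies the favorable factor $p^{\delta/2}$, while the geometric averaging of the variance growth supplies the unfavorable factor $p^{-\beta}=p^{-\alpha(2+\delta)/2}$, and only when $\alpha<1$ can $\delta$ be taken large enough to force $\frac{\delta}{2}-\beta>0$. The routine parts are the two constants $C_0$ (the polynomial envelope for $\sigma_j^2$) and $C_\beta$ (the geometric moment asymptotics), both elementary; I would keep $\epsilon$ fixed throughout, since it enters only through the harmless multiplicative factor $\epsilon^{-\delta}$.
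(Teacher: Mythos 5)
Your proof is correct, and at its core it uses the same two ingredients as the paper's: a Lyapunov-type bound of the truncated second moment by a higher Gaussian absolute moment (which scales like $\sigma_j^{2+\delta}$), followed by the geometric moment estimate $\sum_{j\ge 1}(1-p)^{j-1}p\,j^\beta=O(p^{-\beta})$, which is exactly the content of the paper's Lemma \ref{lem:app.4}. The genuine difference is your choice of exponent. The paper takes the critical value $\eta=\frac{2\alpha}{1-\alpha}$, for which the powers of $p$ cancel exactly; the resulting tail bound is a constant multiple of $\delta^{1/(1-\alpha)}$ where $\delta$ is an eventual bound on $n^{-\alpha}\sigma_n^2$, so the paper must split the sum at a finite index $N$, use the full strength of $n^{-\alpha}\sigma_n^2\to 0$, and take a double limit ($p\to 0$, then $\delta\to 0$). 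You instead take a supercritical exponent $\delta>\frac{2\alpha}{1-\alpha}$, which leaves a strictly positive power $p^{(\delta(1-\alpha)-2\alpha)/2}$ and closes the argument in a single limit using only $\sigma_n^2=O(n^\alpha)$. Since a Gaussian has finite moments of every order, the higher moment costs nothing, so your route is shorter and marginally more general (boundedness of $n^{-\alpha}\sigma_n^2$ suffices here, not its vanishing). The one step you should make explicit is $\E[\nu_p^\beta]\le C_\beta p^{-\beta}$: deducing it from the distributional convergence of $p\nu_p$ to an exponential requires a uniform-integrability argument for $(p\nu_p)^\beta$ that you do not supply, whereas the direct comparison with $\int_0^\infty(1-p)^x x^\beta\diff x$ in Lemma \ref{lem:app.4} gives the bound immediately; with that reference in place the proof is complete.
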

\begin{proof}
Since $n^{-\alpha}\sigma_n^2\to 0$ for some $0<\alpha<1$, for any $\delta>0$ we can choose $N$ such that $n^{-\alpha}\sigma_n^2\le \delta$ for $n>N$.  Since by assumption $Y_j\sim N(0,\sigma_j^2)$, we have $Z=Y_j/\sigma_j\sim N(0,1)$.  Let $\eta=\frac{2\alpha}{1-\alpha}>0$ and $c=\epsilon p^{-\frac{1}{2}}$.  Since $\abs{Y_j/c}\ge 1$ when $\abs{Y_j}\ge c$, for $j>N$ we obtain
\begin{align}
\E\left[Y_j^2\set{\abs{Y_j}\ge c}\right]&\le \E\left[Y_j^2\abs{Y_j/c}^\eta\set{\abs{Y_j}\ge c}\right]\notag\\
&\le \E\left[Y_j^2\abs{Y_j/c}^\eta\right]\le \frac{\sigma_j^{2+\eta}}{c^\eta}\E[\abs{Z}^{2+\eta}]\notag\\
&\le \frac{\delta^\frac{2+\eta}{2} j^\frac{\alpha(2+\eta)}{2}}{c^\eta}\E[\abs{Z}^{2+\eta}]=\frac{\delta^\frac{1}{1-\alpha} j^\frac{\alpha}{1-\alpha}}{c^\eta}\E[\abs{Z}^{2+\eta}],\label{eq:main.2}
\end{align}
where we have used $\frac{2+\eta}{2}=\frac{1}{1-\alpha}$ by the definition of $\eta$. Substituting $c=\epsilon p^{-\frac{1}{2}}$, multiplying \eqref{eq:main.2} by $(1-p)^{j-1}p$ and summing over $j>N$, it follows from Lemma \ref{lem:app.4} that
\begin{align*}
&\sum_{j=N+1}^\infty (1-p)^{j-1}p\E\left[Y_j^2\set{\abs{Y_j}\ge \epsilon p^{-\frac{1}{2}}}\right]\\
&\le \frac{p}{1-p}\sum_{j=N+1}^\infty(1-p)^j \frac{p^\frac{\eta}{2}\delta^\frac{1}{1-\alpha} j^\frac{\alpha}{1-\alpha}}{\epsilon^\eta}\E[\abs{Z}^{2+\eta}]\\
&\le \frac{\delta^\frac{1}{1-\alpha}p^\frac{1}{1-\alpha}}{\epsilon^\eta(1-p)}\E[\abs{Z}^{2+\eta}]\sum_{j=1}^\infty (1-p)^j j^\frac{\alpha}{1-\alpha}\le \frac{C\delta^\frac{1}{1-\alpha}}{\epsilon^\eta(1-p)}\E[\abs{Z}^{2+\eta}]
\end{align*}
for some constant $C>0$.  Hence
\begin{multline*}
\sum_{j=1}^\infty (1-p)^{j-1}p\E\left[Y_j^2\set{\abs{Y_j}\ge \epsilon p^{-\frac{1}{2}}}\right]\\
\le \sum_{j=1}^N (1-p)^{j-1}p\E\left[Y_j^2\set{\abs{Y_j}\ge \epsilon p^{-\frac{1}{2}}}\right]+\frac{C\delta^\frac{1}{1-\alpha}}{\epsilon^\eta(1-p)}\E[\abs{Z}^{2+\eta}].
\end{multline*}
Letting $p\to 0$ and then $\delta \to 0$, condition \eqref{eq:main.1} holds for $\set{Y_j}$.
\end{proof}

\begin{prop}\label{prop:4}
Let $\set{X_j}$, $\set{Y_j}$, $\set{a_j}$, and $\nu_p$ be as in Theorem \ref{thm:1} and Proposition \ref{prop:2}. Then for any bounded $C^3$ function $f$ on $\R$ with bounded derivatives up to the third order, we have
$$\lim_{p\to 0}\abs{\E\left[f(p^\frac{1}{2}\textstyle\sum_{j=1}^{\nu_p}(X_j+p^\frac{1}{2}a_j))\right]
-\E\left[f(p^\frac{1}{2}\textstyle\sum_{j=1}^{\nu_p}(Y_j+p^\frac{1}{2}a_j))\right]}=0.$$

\end{prop}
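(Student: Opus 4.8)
The plan is to run the classical Lindeberg swapping argument---replacing $X_1,\dots,X_{\nu_p}$ by the matching Gaussians $Y_1,\dots,Y_{\nu_p}$ one index at a time---while using the geometric weights to collapse the resulting double sum into a single sum governed by \eqref{eq:main.1}. Writing $T^X:=p^{1/2}\sum_{j=1}^{\nu_p}(X_j+p^{1/2}a_j)$ and $T^Y$ analogously, I would first condition on $\nu_p=n$, so that $\E[f(T^X)]=\sum_{n=1}^\infty(1-p)^{n-1}p\,\E[f(T_n^X)]$ with $T_n^X=p^{1/2}\sum_{j=1}^n(X_j+p^{1/2}a_j)$, and it suffices to control $\abs{\E[f(T_n^X)]-\E[f(T_n^Y)]}$ for each fixed $n$. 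Taking (as is standard) $\set{Y_j}$ independent of $\set{X_j}$, for $1\le k\le n$ let $V_k$ be the hybrid sum built from $Y_1,\dots,Y_{k-1}$ and $X_{k+1},\dots,X_n$ (with the $k$th summand removed), so that $V_k+p^{1/2}X_k$ and $V_k+p^{1/2}Y_k$ telescope from $T_n^X$ to $T_n^Y$, and $V_k$ is independent of both $X_k$ and $Y_k$.

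Next I would Taylor-expand $f$ about $V_k$ to second order. Because $\E[X_k]=\E[Y_k]=0$, $\E[X_k^2]=\E[Y_k^2]=\sigma_k^2$, and $V_k$ is independent of the $k$th summand, the constant, linear, and quadratic contributions to $\E[f(V_k+p^{1/2}X_k)]$ and $\E[f(V_k+p^{1/2}Y_k)]$ agree, leaving only the remainders: $\abs{\E[f(T_n^X)]-\E[f(T_n^Y)]}\le\sum_{k=1}^n(\E\abs{R_k^X}+\E\abs{R_k^Y})$. The heart of the estimate is the two-sided Taylor bound $\abs{R_k^X}\le\min(\frac{M_3}{6}p^{3/2}\abs{X_k}^3,\,M_2\,pX_k^2)$, where $M_2=\sup_x\abs{f''(x)}$ and $M_3=\sup_x\abs{f'''(x)}$. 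Splitting on whether $\abs{X_k}<\epsilon p^{-1/2}$ or $\abs{X_k}\ge\epsilon p^{-1/2}$ and using $\abs{X_k}^3\set{\abs{X_k}<\epsilon p^{-1/2}}\le\epsilon p^{-1/2}X_k^2$ gives $\E\abs{R_k^X}\le\frac{M_3}{6}\epsilon p\,\sigma_k^2+M_2\,p\,\E[X_k^2\set{\abs{X_k}\ge\epsilon p^{-1/2}}]$, and likewise for $Y_k$; crucially, no third moment of $X_k$ is needed, precisely because of the truncation.

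Finally I would reinsert the geometric weights and interchange summation. As all terms are nonnegative and $\sum_{n\ge k}(1-p)^{n-1}p=(1-p)^{k-1}$, the double sum collapses to $\sum_{k=1}^\infty(1-p)^{k-1}(\E\abs{R_k^X}+\E\abs{R_k^Y})$, which is at most $\frac{M_3}{3}\epsilon\,p\sum_{k=1}^\infty(1-p)^{k-1}\sigma_k^2+M_2\big(L_p^X+L_p^Y\big)$, where $L_p^X$ denotes the left-hand side of \eqref{eq:main.1} for $\set{X_j}$ and $L_p^Y$ the same for $\set{Y_j}$. As $p\to0$ we have $L_p^X\to0$ by hypothesis and $L_p^Y\to0$ by Lemma \ref{lem:3}, while $p\sum_{k=1}^\infty(1-p)^{k-1}\sigma_k^2$ stays bounded (indeed tends to $\sigma^2$). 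Hence $\limsup_{p\to0}\abs{\E[f(T^X)]-\E[f(T^Y)]}\le\frac{M_3}{3}\sigma^2\epsilon$, and letting $\epsilon\to0$ proves the claim. The main obstacle is exactly this final balancing: $\epsilon$ must be sent to $0$ to annihilate the cubic remainder, but only after its large-deviation part has been rerouted into the Lindeberg quantity \eqref{eq:main.1}, and the argument hinges on the boundedness of the Abel/Ces\`aro average $p\sum_k(1-p)^{k-1}\sigma_k^2$, which follows from $\frac1n\sum_{j=1}^n\sigma_j^2\to\sigma^2$ by summation by parts rather than from any pointwise control of $\sigma_k^2$.
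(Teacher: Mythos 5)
Your proposal is correct and follows essentially the same route as the paper's proof: the Lindeberg swap through hybrid sums, a second-order Taylor expansion whose first two moments cancel, the two-sided remainder bound $\min(h^2,\abs{h}^3)$ split at the truncation level $\epsilon p^{-1/2}$, interchange of the geometric double sum, and the appeal to \eqref{eq:main.1} for $\set{X_j}$ and Lemma \ref{lem:3} for $\set{Y_j}$ before sending $p\to 0$ and then $\epsilon\to 0$. The only cosmetic difference is that you control the variance term by Abel summation of $p\sum_k(1-p)^{k-1}\sigma_k^2$, whereas the paper bounds $\sum_{j\le n}\sigma_j^2\le Mn$ before summing over $n$; the two estimates are equivalent.
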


\begin{proof}
Fix $n$ and consider
\begin{align}
&f(p^\frac{1}{2}\textstyle\sum_{j=1}^n(X_j+p^\frac{1}{2}a_j))
-f(p^\frac{1}{2}\sum_{j=1}^n(Y_j+p^\frac{1}{2}a_j))\notag\\
&=\sum_{j=1}^n[f(p^\frac{1}{2}(X_j+Z_j))-f(p^\frac{1}{2}(Y_j+Z_j))],\label{eq:main.3}
\end{align}
where
$$Z_j=X_1+\dotsb+X_{j-1}+Y_{j+1}+\dotsb+Y_n+p^\frac{1}{2}\sum_{k=1}^na_k.$$
By Corollary \ref{cor:app.6}, the $j$-th term of \eqref{eq:main.3} is equal to
\begin{multline}
f(p^\frac{1}{2}(X_j+Z_j))-f(p^\frac{1}{2}(Y_j+Z_j))\\
=f'(p^\frac{1}{2}Z_j)p^\frac{1}{2}(X_j-Y_j)+\frac{f''(p^\frac{1}{2}Z_j)}{2}p(X_j^2-Y_j^2)+R_j,\label{eq:main.4}
\end{multline}
where $R_j$ is the remainder term.  $R_j$ is bounded by
\begin{equation}
\abs{R_j}\le g(p^\frac{1}{2}X_j)+g(p^\frac{1}{2}Y_j),\label{eq:main.5}
\end{equation}
where $g(h)=K\min\set{h^2,\abs{h}^3}$ for some $K>0$.  Noting that $X_j,Y_j$ are independent of $Z_j$, $\E[X_j]=\E[Y_j]=0$, and $\var[X_j]=\var[Y_j]=\sigma_j^2$, taking expectations of both sides of \eqref{eq:main.4}, we get
$$\abs{\E[f(p^\frac{1}{2}(X_j+Z_j))]-\E[f(p^\frac{1}{2}(Y_j+Z_j))]}\le \E[\abs{R_j}].$$
Therefore by the triangle inequality and \eqref{eq:main.5} we obtain
\begin{align}
&\abs{\E[f(p^\frac{1}{2}\textstyle\sum_{j=1}^n(X_j+p^\frac{1}{2}a_j))]
-\E[f(p^\frac{1}{2}\sum_{j=1}^n(Y_j+p^\frac{1}{2}a_j))]}\notag\\
&\le \sum_{j=1}^nE[\abs{R_j}]\le \sum_{j=1}^n\left(\E[g(p^\frac{1}{2}X_j)]+\E[g(p^\frac{1}{2}Y_j)]\right).\label{eq:main.6}
\end{align}
Using the definition of $g$, we can bound $\E[g(p^\frac{1}{2}X_j)]$ as
\begin{align}
\E[g(p^\frac{1}{2}X_j)]&=\E\left[g(p^\frac{1}{2}X_j)\set{\abs{X_j}<\epsilon p^{-\frac{1}{2}}}\right]+\E\left[g(p^\frac{1}{2}X_j)\set{\abs{X_j}\ge \epsilon p^{-\frac{1}{2}}}\right]\notag\\
&\le K\E\left[\abs{p^\frac{1}{2}X_j}^3\set{\abs{X_j}< \epsilon p^{-\frac{1}{2}}}\right]+K\E\left[(p^\frac{1}{2}X_j)^2\set{\abs{X_j}\ge \epsilon p^{-\frac{1}{2}}}\right]\notag\\
&=K\E\left[(p^\frac{1}{2}X_j)^2\epsilon\set{\abs{X_j}< \epsilon p^{-\frac{1}{2}}}\right]+K\E\left[(p^\frac{1}{2}X_j)^2\set{\abs{X_j}\ge \epsilon p^{-\frac{1}{2}}}\right]\notag\\
&\le \epsilon Kp\sigma_j^2+Kp \E\left[X_j^2\set{\abs{X_j}\ge \epsilon p^{-\frac{1}{2}}}\right].\label{eq:main.7}
\end{align}
Now let $\tau_n^2=\sum_{j=1}^n\sigma_j^2$.  Since $\tau_n^2/n\to \sigma^2$ by assumption, $\set{\tau_n^2/n}$ is bounded by some $M>0$.  Then
\begin{equation}
\sum_{n=1}^\infty (1-p)^{n-1}p\sum_{j=1}^n \epsilon Kp\sigma_j^2\le \sum_{n=1}^\infty (1-p)^{n-1}p \epsilon Kp Mn=\epsilon KM.\label{eq:main.8}
\end{equation}
Also, by changing the order of summation we obtain
\begin{align}
&\sum_{n=1}^\infty(1-p)^{n-1}p\sum_{j=1}^n Kp\E\left[X_j^2\set{\abs{X_j}\ge \epsilon p^{-\frac{1}{2}}}\right]\notag\\
&=\sum_{j=1}^\infty \sum_{n=j}^\infty (1-p)^{n-1}p Kp\E\left[X_j^2\set{\abs{X_j}\ge \epsilon p^{-\frac{1}{2}}}\right]\notag\\
&=K \sum_{j=1}^\infty(1-p)^{j-1}p \E\left[X_j^2\set{\abs{X_j}\ge \epsilon p^{-\frac{1}{2}}}\right].\label{eq:main.9}
\end{align}
Combining \eqref{eq:main.7}, \eqref{eq:main.8}, and \eqref{eq:main.9}, we obtain
\begin{multline}
\sum_{n=1}^\infty(1-p)^{n-1}p\sum_{j=1}^n \E[g(p^\frac{1}{2}X_j)]\\
\le KM\epsilon+K\sum_{j=1}^\infty (1-p)^{j-1}p \E\left[X_j^2\set{\abs{X_j}\ge \epsilon p^{-\frac{1}{2}}}\right].\label{eq:main.10}
\end{multline}
The same inequality as \eqref{eq:main.10} holds when $\set{X_j}$ is replaced by $\set{Y_j}$.  Hence applying condition \eqref{eq:main.1} to \eqref{eq:main.10} and invoking Lemma \ref{lem:3}, it follows from \eqref{eq:main.6} and \eqref{eq:main.10} that
\begin{align*}
&\abs{\E\left[f(p^\frac{1}{2}\textstyle\sum_{j=1}^{\nu_p}(X_j+p^\frac{1}{2}a_j))\right]
-\E\left[f(p^\frac{1}{2}\textstyle\sum_{j=1}^{\nu_p}(Y_j+p^\frac{1}{2}a_j))\right]}\\
&\le \sum_{n=1}^\infty (1-p)^{n-1}p \abs{\E\left[f(p^\frac{1}{2}\textstyle\sum_{j=1}^n(X_j+p^\frac{1}{2}a_j))\right]
-\E\left[f(p^\frac{1}{2}\textstyle\sum_{j=1}^n(Y_j+p^\frac{1}{2}a_j))\right]}\\
&\le \sum_{n=1}^\infty (1-p)^{n-1}p \sum_{j=1}^n\left(\E[g(p^\frac{1}{2}X_j)]+\E[g(p^\frac{1}{2}Y_j)]\right)\\
&\le 2KM\epsilon+K\sum_{j=1}^\infty (1-p)^{j-1}p \left(\E\left[X_j^2\set{\abs{X_j}\ge \epsilon p^{-\frac{1}{2}}}\right]+\E\left[Y_j^2\set{\abs{Y_j}\ge \epsilon p^{-\frac{1}{2}}}\right]\right)\\
&\to 2KM\epsilon
\end{align*}
as $p\to 0$.  Since $\epsilon>0$ is arbitrary, letting $\epsilon\to 0$ we get
$$\lim_{p\to 0}\abs{\E\left[f(p^\frac{1}{2}\textstyle\sum_{j=1}^{\nu_p}(X_j+p^\frac{1}{2}a_j))\right]
-\E\left[f(p^\frac{1}{2}\textstyle\sum_{j=1}^{\nu_p}(Y_j+p^\frac{1}{2}a_j))\right]}=0. $$
\end{proof}

\begin{proof}[Proof of Theorem \ref{thm:1}]
Let $f(x)=\e^{itx}$.  $f$ is $C^\infty$ and all of its derivatives are bounded because $\abs{f^{(n)}(x)}=\abs{(it)^n\e^{itx}}=t^n$, which does not depend on $x$.  Let $\set{Y_j}$ be as in Proposition \ref{prop:2}.  Then by Proposition \ref{prop:4}, we get
$$\lim_{p\to 0}\abs{\E\left[\e^{itp^\frac{1}{2}\sum_{j=1}^{\nu_p}(X_j+p^\frac{1}{2}a_j)}\right]
-\E\left[\e^{itp^\frac{1}{2}\sum_{j=1}^{\nu_p}(Y_j+p^\frac{1}{2}a_j)}\right]}=0.$$
Hence by Proposition \ref{prop:2} we have
$$\lim_{p\to 0}\E\left[\e^{itp^\frac{1}{2}\sum_{j=1}^{\nu_p}(X_j+p^\frac{1}{2}a_j)}\right]=\lim_{p\to 0}\E\left[\e^{itp^\frac{1}{2}\sum_{j=1}^{\nu_p}(Y_j+p^\frac{1}{2}a_j)}\right]=\frac{1}{1-iat+\frac{\sigma^2t^2}{2}}.$$
Since the right-most expression is the characteristic function of $\AL(0,a,\sigma)$ which is continuous at $t=0$, by L\'evy's continuity theorem $p^\frac{1}{2}\sum_{j=1}^{\nu_p}(X_j+p^\frac{1}{2}a_j)$ converges in distribution to $\AL(0,a,\sigma)$ as $p\to 0$.
\end{proof}

\section{Multivariate case}\label{sec:mult}
The generalization of Theorem \ref{thm:1} to the multivariate case is straightforward.  If $\vec{X}$ is a $d$-dimensional random variable with characteristic function
$$\phi_{\vec{X}}(\vec{t})=\frac{\e^{i\vec{m}'\vec{t}}}{1-i\vec{a}'\vec{t}+\frac{1}{2}\vec{t}'\vec{\Sigma} \vec{t}},$$
where $\vec{m},\vec{a}\in\R^d$ and $\vec{\Sigma}$ is a $d\times d$ symmetric and positive definite matrix, then the distribution of $\vec{X}$ is said to be multivariate Laplace which we denote by $\AL_d(\vec{m},\vec{a},\vec{\Sigma})$.  The mean, mode, and variance of $\AL_d(\vec{m},\vec{a},\vec{\Sigma})$ is $\vec{m}+\vec{a}$, $\vec{m}$, and $\vec{\Sigma}+\vec{a}\vec{a}'$, respectively.

\begin{thm}
Let $\set{\vec{X}_j}$ be a sequence of independent but not identically distributed (i.n.i.d) random vectors in $\R^d$ such that $\E[\vec{X}_j]=\vec{0}$ and $\var[\vec{X}_j]=\vec{\Sigma}_j$, $\set{\vec{a}_j}$ be a sequence in $\R^d$, and $\nu_p$ be a geometric random variable independent of $\vec{X}_j$'s with mean $1/p$.  Suppose that
\begin{enumerate}
\item $\lim\limits_{n\to\infty}n^{-\alpha}\vec{\Sigma}_n=\vec{O}$ for some $0<\alpha<1$ and $\vec{\Sigma}:=\lim\limits_{n\to \infty}\frac{1}{n}\sum_{j=1}^n\vec{\Sigma}_j$ exists and positive definite,
\item $\vec{a}:=\lim\limits_{n\to\infty}\frac{1}{n}\sum_{j=1}^n \vec{a}_j$ exists, and
\item for all $\epsilon>0$ we have
\begin{equation}
\lim_{p\to 0}\sum_{j=1}^\infty (1-p)^{j-1}p\E\left[\norm{\vec{X}_j}^2\set{\norm{\vec{X}_j}\ge \epsilon p^{-\frac{1}{2}}}\right]=0,\label{eq:mult.1}
\end{equation}
where $\norm{\cdot}$ denotes the Euclidean norm.
\end{enumerate}
Then, as $p\to 0$ the geometric sum $p^\frac{1}{2}\sum_{j=1}^{\nu_p}(\vec{X}_j+p^\frac{1}{2}\vec{a}_j)$ converges in distribution to $\AL_d(\vec{0},\vec{a},\vec{\Sigma})$.
\end{thm}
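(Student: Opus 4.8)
The plan is to reduce the multivariate statement to the univariate Theorem \ref{thm:1} via the Cram\'er--Wold device, so that essentially no new estimate has to be redone. Fix a direction $\vec{t}\in\R^d$. If $\vec{t}=\vec{0}$ the projected sum is identically $0$ and there is nothing to prove, so assume $\vec{t}\neq\vec{0}$ and set $X_j:=\vec{t}'\vec{X}_j$ and $a_j:=\vec{t}'\vec{a}_j$. The $X_j$ are independent (being functions of the independent $\vec{X}_j$), with $\E[X_j]=\vec{t}'\E[\vec{X}_j]=0$ and $\var[X_j]=\vec{t}'\vec{\Sigma}_j\vec{t}=:\sigma_j^2$. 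I would then verify that this scalar sequence, together with the real sequence $\set{a_j}$ and the same $\nu_p$, satisfies the three hypotheses of Theorem \ref{thm:1} with limiting parameters $a=\vec{t}'\vec{a}$ and $\sigma^2=\vec{t}'\vec{\Sigma}\vec{t}$.

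Conditions 1 and 2 are immediate by linearity: $n^{-\alpha}\sigma_n^2=\vec{t}'(n^{-\alpha}\vec{\Sigma}_n)\vec{t}\to 0$, while $\frac{1}{n}\sum_{j=1}^n\sigma_j^2=\vec{t}'\bigl(\frac{1}{n}\sum_{j=1}^n\vec{\Sigma}_j\bigr)\vec{t}\to\vec{t}'\vec{\Sigma}\vec{t}$, which is strictly positive because $\vec{\Sigma}$ is positive definite and $\vec{t}\neq\vec{0}$; similarly $\frac{1}{n}\sum_{j=1}^n a_j\to\vec{t}'\vec{a}$. The only step needing a short argument is the Lindeberg-type condition \eqref{eq:main.1} for $\set{X_j}$. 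By Cauchy--Schwarz we have $(\vec{t}'\vec{X}_j)^2\le\norm{\vec{t}}^2\norm{\vec{X}_j}^2$, and $\abs{\vec{t}'\vec{X}_j}\ge\epsilon p^{-\frac{1}{2}}$ forces $\norm{\vec{X}_j}\ge(\epsilon/\norm{\vec{t}})p^{-\frac{1}{2}}$, so
\[
\E\!\left[(\vec{t}'\vec{X}_j)^2\set{\abs{\vec{t}'\vec{X}_j}\ge\epsilon p^{-\frac{1}{2}}}\right]\le\norm{\vec{t}}^2\,\E\!\left[\norm{\vec{X}_j}^2\set{\norm{\vec{X}_j}\ge(\epsilon/\norm{\vec{t}})p^{-\frac{1}{2}}}\right].
\]
Multiplying by $(1-p)^{j-1}p$, summing over $j$, and applying the multivariate condition \eqref{eq:mult.1} with $\epsilon$ replaced by the (still arbitrary) positive number $\epsilon/\norm{\vec{t}}$ yields \eqref{eq:main.1} for $\set{X_j}$.

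With all hypotheses checked, Theorem \ref{thm:1} gives $p^{\frac{1}{2}}\sum_{j=1}^{\nu_p}(\vec{t}'\vec{X}_j+p^{\frac{1}{2}}\vec{t}'\vec{a}_j)\dto\AL(0,\vec{t}'\vec{a},\sqrt{\vec{t}'\vec{\Sigma}\vec{t}})$. To finish I would identify this one-dimensional limit as the corresponding projection of the candidate multivariate law: if $\vec{W}\sim\AL_d(\vec{0},\vec{a},\vec{\Sigma})$, then the characteristic function of $\vec{t}'\vec{W}$ at $u\in\R$ is $\phi_{\vec{W}}(u\vec{t})=\bigl(1-iu\vec{a}'\vec{t}+\tfrac{u^2}{2}\vec{t}'\vec{\Sigma}\vec{t}\bigr)^{-1}$, which is exactly the characteristic function of $\AL(0,\vec{t}'\vec{a},\sqrt{\vec{t}'\vec{\Sigma}\vec{t}})$ since $\vec{a}'\vec{t}=\vec{t}'\vec{a}$. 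Thus every one-dimensional projection of $p^{\frac{1}{2}}\sum_{j=1}^{\nu_p}(\vec{X}_j+p^{\frac{1}{2}}\vec{a}_j)$ converges in distribution to the matching projection of $\AL_d(\vec{0},\vec{a},\vec{\Sigma})$, and the Cram\'er--Wold theorem delivers the joint convergence. The main obstacle is genuinely minor and is precisely the Lindeberg reduction above: one must confirm that the scalar truncation sets are controlled by the vector truncation sets uniformly in $j$, which Cauchy--Schwarz handles cleanly; everything else is bookkeeping in the linear parameters. An alternative would be to re-run Propositions \ref{prop:2} and \ref{prop:4} verbatim using a multivariate Taylor expansion in place of Corollary \ref{cor:app.6}, but the projection argument is shorter and avoids reproving the swapping estimate.
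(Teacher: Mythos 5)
Your proposal is correct and follows essentially the same route as the paper: project onto $\vec{t}'\vec{X}_j$, verify the hypotheses of Theorem \ref{thm:1} (with the Cauchy--Schwarz containment of the scalar truncation set in the vector one for the Lindeberg condition), and recover joint convergence from the one-dimensional limits. The only cosmetic difference is that you invoke the Cram\'er--Wold theorem by name, whereas the paper applies L\'evy's multivariate continuity theorem directly to the limiting characteristic function $\bigl(1-i\vec{t}'\vec{a}+\tfrac{1}{2}\vec{t}'\vec{\Sigma}\vec{t}\bigr)^{-1}$; these are equivalent.
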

\begin{proof}
Let us first show that for any $\vec{0}\neq \vec{t}\in\R^d$ the sequence of real random variables $\set{\vec{t}'\vec{X}_j}$ satisfies the assumptions of Theorem \ref{thm:1}.  Since $\E[\vec{X}_j]=\vec{0}$ and $\var[\vec{X}_j]=\vec{\Sigma}_j$, we have $\E[\vec{t}'\vec{X}_j]=0$ and $\var[\vec{t}'\vec{X}_j]=\vec{t}'\vec{\Sigma}_j\vec{t}$.  Hence $\lim\limits_{n\to\infty}n^{-\alpha}\var[\vec{t}'\vec{X}_n]=0$ and
$$\lim\limits_{n\to\infty}\frac{1}{n}\sum_{j=1}^n\var[\vec{t}'\vec{X}_j]=\vec{t}'\vec{\Sigma} \vec{t}>0$$
because $\vec{\Sigma}$ is positive definite and $\vec{t}\neq \vec{0}$.  Also, $\lim\limits_{n\to\infty}\frac{1}{n}\sum_{j=1}^n\vec{t}'\vec{a}_j=\vec{t}'\vec{a}$.  By the Cauchy-Schwarz inequality, we have $\abs{\vec{t}'\vec{X}}\le \norm{\vec{t}}\norm{\vec{X}}$.  Hence
$$\set{\abs{\vec{t}'\vec{X}}\ge c}\subset \set{\norm{\vec{t}}\norm{\vec{X}}\ge c}=\set{\norm{\vec{X}}\ge \frac{c}{\norm{\vec{t}}}}.$$
Therefore for all $\epsilon>0$ we have
\begin{align*}
&\sum_{j=1}^\infty (1-p)^{j-1}p \E\left[(\vec{t}'\vec{X}_j)^2\set{\abs{\vec{t}'\vec{X}_j}\ge \epsilon p^{-\frac{1}{2}}}\right]\\
&\le \norm{\vec{t}}^2\sum_{j=1}^\infty (1-p)^{j-1}p \E\left[\norm{\vec{X}_j}^2\set{\norm{\vec{X}_j}\ge \frac{\epsilon}{\norm{\vec{t}}} p^{-\frac{1}{2}}}\right]\to 0
\end{align*}
as $p\to 0$ by condition \eqref{eq:mult.1}, so $\set{\vec{t}'\vec{X}_j}$ satisfies condition \eqref{eq:main.1} of Theorem \ref{thm:1}.  Since $\set{\vec{t}'\vec{X}_j}$ satisfies all assumptions of Theorem \ref{thm:1}, it follows that
$$\vec{t}'p^\frac{1}{2}\sum_{j=1}^{\nu_p}(\vec{X}_j+p^\frac{1}{2}\vec{a}_j)
=p^\frac{1}{2}\sum_{j=1}^{\nu_p}(\vec{t}'\vec{X}_j+p^\frac{1}{2}\vec{t}'\vec{a}_j)\dto \AL(\vec{0},\vec{t}'\vec{a},\sqrt{\vec{t}'\vec{\Sigma}\vec{t}})$$
as $p\to 0$.  This shows that
\begin{equation}
\lim_{p\to 0}\E\left[\e^{i\vec{t}'p^\frac{1}{2}\sum_{j=1}^{\nu_p}(\vec{X}_j+p^\frac{1}{2}\vec{a}_j)}\right]=\frac{1}{1-i\vec{t}'\vec{a}+\frac{1}{2}\vec{t}'\vec{\Sigma}\vec{t}}.\label{eq:mult.2}
\end{equation}
In proving \eqref{eq:mult.2} we have assumed that $\vec{t}\neq \vec{0}$, but \eqref{eq:mult.2} trivially holds for $\vec{t}=\vec{0}$.  Since the right-most expression of \eqref{eq:mult.2} is continuous at $\vec{t}=\vec{0}$, by L\'evy's continuity theorem
$$p^\frac{1}{2}\sum_{j=1}^{\nu_p}(\vec{X}_j+p^\frac{1}{2}\vec{a}_j)\dto \AL_d(\vec{0},\vec{a},\vec{\Sigma}). $$
\end{proof}

\section{Concluding remarks}
In this paper I showed that the properly normalized geometric sum $\sum_{j=1}^{\nu_p}X_j$ converges in distribution to a Laplace random variable even if the random variables $\set{X_j}$ are not identically distributed as long as they are independent.  The proof is similar to that of the Lindeberg-Feller central limit theorem.  This theorem provides a reason why many economic and financial variables obey the power law not just in the right tail \cite{gabaix2009} but also in the left tail.  If an economic variable results from a large, deterministic number of independent multiplicative shocks, that variable will be lognormally distributed as first observed by \cite{gibrat1931}.  However, in reality many variables seem to be well-described by the double Pareto and related distribution  \cite{reed2001,reed2003,giesen-zimmermann-suedekum2010,toda-income-PRE,AMIK2012}.  If we incorporate the death probability of economic units in the model, the number of multiplicative shocks is not deterministic but a geometric random variable.  Theorem \ref{thm:1} (in exponential form) then states that the geometric product of independent positive random variables tends to the double Pareto distribution, which is empirically supported.

Since the central limit theorem holds under general conditions (for example, ergodicity and stationarity), we can expect that the properly normalized geometric sum of random variables converges in distribution to a Laplace distribution under such conditions even if independence fails.  Addressing these issues are beyond the scope of this paper but interesting to pursue. 

\appendix

\section{Lemmas}
\begin{lem}\label{lem:app.1}
For $z\in \C$, we have $\abs{\e^z-1}\le \abs{z}\e^{\abs{z}}$.
\end{lem}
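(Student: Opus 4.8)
The plan is to prove the bound directly from the power series of the exponential, which converges absolutely for every $z\in\C$. First I would discard the constant term and write
$$\e^z-1=\sum_{n=1}^\infty \frac{z^n}{n!},$$
and then apply the triangle inequality — legitimate for the infinite sum precisely because the series converges absolutely — to obtain
$$\abs{\e^z-1}\le \sum_{n=1}^\infty \frac{\abs{z}^n}{n!}.$$

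Next I would compare this majorant termwise with the series for $\abs{z}\e^{\abs{z}}$. Shifting the summation index gives $\abs{z}\e^{\abs{z}}=\sum_{m=0}^\infty \frac{\abs{z}^{m+1}}{m!}=\sum_{n=1}^\infty \frac{\abs{z}^n}{(n-1)!}$, so the claimed inequality reduces to the elementary fact that $n!\ge (n-1)!$, i.e.\ $\frac{1}{n!}\le \frac{1}{(n-1)!}$, for every $n\ge 1$. Summing these termwise bounds yields $\sum_{n=1}^\infty \frac{\abs{z}^n}{n!}\le \abs{z}\e^{\abs{z}}$, and chaining this with the triangle-inequality step completes the argument.

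There is essentially no obstacle here; the only points that require care are the absolute convergence of the exponential series on all of $\C$ (which justifies applying the triangle inequality to an infinite sum) and the bookkeeping in the index shift. If one prefers to avoid series manipulations, an equally short alternative is the integral representation $\e^z-1=z\int_0^1 \e^{tz}\diff t$, which gives $\abs{\e^z-1}\le \abs{z}\int_0^1 \abs{\e^{tz}}\diff t=\abs{z}\int_0^1 \e^{t\Re z}\diff t\le \abs{z}\e^{\abs{z}}$, using $\abs{\e^{tz}}=\e^{t\Re z}$ together with $\Re z\le \abs{z}$ and $t\le 1$. I expect to present the power-series version, since it is self-contained and requires only the most basic estimate.
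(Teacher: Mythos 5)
Your argument is correct and is essentially the paper's own proof: both expand $\e^z-1$ as its power series, apply the triangle inequality, and conclude via a termwise factorial comparison (the paper writes it as $(n+1)!\ge n!$ after factoring out $\abs{z}$, you as $n!\ge(n-1)!$ after an index shift, which is the same estimate). No gaps; the integral-representation alternative you mention is also fine but unnecessary.
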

\begin{proof}
Using the Taylor expansion of $\e^z$, we obtain
$$\abs{\e^z-1}=\abs{\sum_{n=1}^\infty \frac{z^n}{n!}}\le \abs{z}\sum_{n=0}^\infty \frac{\abs{z}^n}{(n+1)!}\le \abs{z}\sum_{n=0}^\infty \frac{\abs{z}^n}{n!}=\abs{z}\e^{\abs{z}}. $$
\end{proof}

\begin{lem}\label{lem:app.2}
For $0<p<1$ and $x\ge -1$, we have $0<(1-p)\e^{-px}<1$.
\end{lem}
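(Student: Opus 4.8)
The plan is to separate the positivity claim, which is immediate, from the upper bound, which I would reduce to a single elementary exponential inequality. For the lower bound $0<(1-p)\e^{-px}$, I would simply note that $1-p>0$ because $p<1$, while $\e^{-px}>0$ for every real exponent; the product of two strictly positive quantities is strictly positive, so no real work is needed here.

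For the upper bound $(1-p)\e^{-px}<1$, I would first exploit the hypothesis $x\ge -1$ to pin down the worst case. Multiplying $x\ge -1$ by $p>0$ gives $px\ge -p$, hence $-px\le p$, and by monotonicity of the exponential $\e^{-px}\le \e^{p}$. Since $1-p>0$, multiplying through preserves the inequality and yields $(1-p)\e^{-px}\le (1-p)\e^{p}$. It therefore suffices to prove $(1-p)\e^{p}<1$, i.e.\ to check the boundary value $x=-1$ already sits strictly below $1$; monotonicity in $x$ then handles all $x\ge -1$ at once.

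The one substantive step is thus $(1-p)\e^{p}<1$, which I would deduce from the standard tangent-line bound $1+t<\e^{t}$, valid for every $t\ne 0$ with equality only at $t=0$. Taking $t=-p$, which is nonzero precisely because $0<p<1$, gives $1-p<\e^{-p}$; multiplying both sides by $\e^{p}>0$ gives $(1-p)\e^{p}<1$. Chaining the two estimates produces $(1-p)\e^{-px}\le (1-p)\e^{p}<1$, completing the argument.

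The main (indeed the only) obstacle is ensuring the \emph{strictness} of the upper inequality at the boundary $x=-1$: one must confirm $1-p<\e^{-p}$ holds strictly rather than with equality. This is guaranteed because $p\ne 0$ on the open interval $(0,1)$, so the convexity bound for $t\mapsto \e^{t}$ is strict away from the origin. As a safeguard I could instead set $h(p)=(1-p)\e^{p}$ and observe $h(0)=1$ together with $h'(p)=-p\e^{p}<0$ for $p\in(0,1)$, which forces $h(p)<1$ directly and thereby secures the strict inequality without invoking the tangent-line estimate.
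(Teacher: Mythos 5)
Your proof is correct and rests on the same elementary ingredient as the paper's, namely the tangent-line bound $\e^{t}\ge 1+t$; the paper chains $\e^{px}\ge 1+px\ge 1-p$ and checks that the two equalities cannot hold simultaneously, while you reduce to the boundary case $x=-1$ by monotonicity and then apply the strict inequality at $t=-p\neq 0$. The difference is only in bookkeeping, and your handling of strictness is if anything slightly cleaner.
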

\begin{proof}
Since $\e^t\ge 1+t$ for all $t$, we get $\e^{px}\ge 1+px\ge 1-p>0$.  The first equality holds if and only if $px=0$ and the second if and only if $x=-1$, but since $0<p<1$ the two equalities cannot hold simultaneously.  Hence $0<(1-p)\e^{-px}<1$.
\end{proof}

\begin{lem}\label{lem:app.3}
Let $0<p<1$ and $\set{z_n}\subset \C$ be such that $\lim\limits_{n\to \infty}z_n=z$ with $\Re z>-1$.  Then
$$\lim_{p\to 0}\sum_{n=1}^\infty (1-p)^{n-1}p\e^{-pn z_n}=\frac{1}{1+z}.$$
\end{lem}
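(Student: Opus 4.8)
The plan is to replace $z_n$ by its limit $z$ inside the exponential, compute the resulting geometric series explicitly, and then show the replacement introduces a vanishing error. Concretely, I would write
$$\sum_{n=1}^\infty (1-p)^{n-1}p\e^{-pnz_n}=\sum_{n=1}^\infty (1-p)^{n-1}p\e^{-pnz}+E_p,\qquad E_p:=\sum_{n=1}^\infty (1-p)^{n-1}p\e^{-pnz}\left(\e^{-pn(z_n-z)}-1\right).$$
For the leading part, since $\Re z>-1$, Lemma \ref{lem:app.2} with $x=\Re z$ gives $(1-p)\abs{\e^{-pz}}=(1-p)\e^{-p\Re z}<1$, so the series sums to $\frac{p\e^{-pz}}{1-(1-p)\e^{-pz}}$. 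Expanding $(1-p)\e^{-pz}=1-p(1+z)+O(p^2)$ and $p\e^{-pz}=p+O(p^2)$, this ratio tends to $\frac{1}{1+z}$ as $p\to 0$, where $1+z\neq 0$ because $\Re(1+z)>0$. It then remains to prove $E_p\to 0$.

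To control $E_p$, fix $\delta$ with $0<\delta<1+\Re z$ and choose $N$ so that $\abs{z_n-z}<\delta$ for $n>N$. For the tail I would apply Lemma \ref{lem:app.1} with $w=-pn(z_n-z)$ to get $\abs{\e^{-pn(z_n-z)}-1}\le pn\abs{z_n-z}\e^{pn\abs{z_n-z}}\le pn\delta\,\e^{pn\delta}$, and combine with $\abs{\e^{-pnz}}=\e^{-pn\Re z}$. Writing $r:=(1-p)\e^{-p(\Re z-\delta)}$, which lies in $(0,1)$ by Lemma \ref{lem:app.2} since $\Re z-\delta>-1$, the tail is bounded by
$$\delta\sum_{n>N}(1-p)^{n-1}p^2 n\,\e^{-pn(\Re z-\delta)}=\frac{\delta p^2}{1-p}\sum_{n>N}nr^n\le \frac{\delta p^2}{1-p}\cdot\frac{r}{(1-r)^2}.$$
Because $1-r=p(1+\Re z-\delta)+O(p^2)$, the right-hand side converges to $\frac{\delta}{(1+\Re z-\delta)^2}$ as $p\to 0$. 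For the head $n\le N$, there are only finitely many terms, and each carries a factor $p\to 0$ with all other factors bounded, so the head tends to $0$. Hence $\limsup_{p\to 0}\abs{E_p}\le \frac{\delta}{(1+\Re z-\delta)^2}$ for every admissible $\delta$, and letting $\delta\to 0$ forces $E_p\to 0$, which completes the argument.

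The main obstacle I anticipate is the tail estimate: the factor $\e^{pn\abs{z_n-z}}$ supplied by Lemma \ref{lem:app.1} grows in $n$, and it must be absorbed by the geometric weight uniformly in $p$ as $p\to 0$. This is exactly where the hypothesis $\Re z>-1$ is used, since it permits choosing $\delta<1+\Re z$ so that $r=(1-p)\e^{-p(\Re z-\delta)}<1$ and, crucially, $\frac{p^2}{(1-r)^2}$ stays bounded (indeed converges) rather than blowing up. Everything else—the geometric summation, the Taylor expansions, and the finite head bound—is routine once this exponential growth is tamed.
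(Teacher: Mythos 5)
Your proposal is correct and follows essentially the same route as the paper: the same decomposition into the constant-$z$ geometric series plus an error term, the same use of Lemma \ref{lem:app.1} and Lemma \ref{lem:app.2} to tame the tail, and the same final bound $\delta/(1+\Re z-\delta)^2$ sent to zero. The only cosmetic difference is that you evaluate the limits by Taylor expansion where the paper invokes l'H\^opital's rule.
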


\begin{proof}
For $0<p<1$ let $S(p)=\sum_{n=1}^\infty (1-p)^{n-1}p\e^{-pn z_n}$.  First we prove that $S(p)$ exists.  For this purpose let $z_n=x_n+iy_n$.  Since $\lim x_n>-1$, we can choose $N>0$ such that $x_n>-1$ for $n>N$.  Then by the triangle inequality
\begin{align*}
\abs{S(p)}&\le \sum_{n=1}^N (1-p)^{n-1}p\e^{-pn x_n}+\sum_{n=N+1}^\infty (1-p)^{n-1}p\e^{-pn x_n}\\
&\le \sum_{n=1}^N (1-p)^{n-1}p\e^{-pn x_n}+\sum_{n=N+1}^\infty (1-p)^{n-1}p\e^{pn}\\
&=\sum_{n=1}^N (1-p)^{n-1}p\e^{-pn x_n}+(1-p)^N \frac{p\e^p}{1-(1-p)\e^p}
\end{align*}
because $0<(1-p)\e^p<1$ by setting $x=-1$ in Lemma \ref{lem:app.2}.  Hence $S(p)$ exists.  Replacing $\set{z_n}$ with $z$ and applying the same argument,
$$T(p):=\sum_{n=1}^\infty (1-p)^{n-1}p\e^{-pn z}=\frac{p\e^{-pz}}{1-(1-p)\e^{-pz}}$$
exists.  Now by l'H\^opital's rule we have
$$\lim_{p\to 0}T(p)=\lim_{p\to 0}\frac{p}{\e^{pz}-(1-p)}=\lim_{p\to 0}\frac{1}{z\e^{pz}+1}=\frac{1}{1+z},$$
so it suffices to show that $\abs{S(p)-T(p)}\to 0$ as $p\to 0$.  For any $0<\epsilon<1+\Re z$, choose $N>0$ such that $\abs{z_n-z}<\epsilon$ for $n>N$.  Consider
\begin{multline*}
\abs{S(p)-T(p)}\le \sum_{n=1}^N (1-p)^{n-1}p\abs{\e^{-pnz_n}-\e^{-pnz}}\\
+\sum_{n=N+1}^\infty (1-p)^{n-1}p\abs{\e^{-pnz_n}-\e^{-pnz}}=I+II.
\end{multline*}
Since each term of $S(p)$ and $T(p)$ tends to zero as $p\to 0$, we have $I\to 0$.  By the choice of $N$ and Lemma \ref{lem:app.1}, letting $z=x+iy$ we get
\begin{align*}
II&\le \sum_{n=1}^\infty (1-p)^{n-1}p\e^{-pnx}\abs{\e^{-pn(z_n-z)}-1}\le \sum_{n=1}^\infty (1-p)^{n-1}p\e^{-pnx} pn\abs{z_n-z}\e^{pn\abs{z_n-z}}\\
&\le \sum_{n=N+1}^\infty (1-p)^{n-1}p\e^{-pnx} pn\epsilon\e^{pn\epsilon}=\sum_{n=N+1}^\infty (1-p)^{n-1}p\e^{-pn(x-\epsilon)}pn\epsilon.
\end{align*}
Since $\Re z-\epsilon>-1$, by Lemma \ref{lem:app.2} we have $(1-p)\e^{-p(x-\epsilon)}<1$, so the above sum converges.  Then
\begin{align*}
II&\le \epsilon p^2\sum_{n=N+1}^\infty (1-p)^{n-1}\e^{-pn(x-\epsilon)}n\\
&=\epsilon\left(\frac{p}{\e^{p(x-\epsilon)}-1+p}\right)^2 (1-p)^N\e^{-p(N-1)(z-\epsilon)}\to \frac{\epsilon}{(1+x-\epsilon)^2}
\end{align*}
as $p\to 0$ by applying l'H\^opital's rule to the above fraction.  Letting $\epsilon\to 0$, we obtain $II\to 0$.  Hence $\lim S(p)=\lim T(p)=\frac{1}{1+z}$.
\end{proof}

\begin{lem}\label{lem:app.4}
If $0<p<1$ and $\alpha>-1$, there exists a constant $C>0$ such that
$$\sum_{n=1}^\infty (1-p)^n n^\alpha \le Cp^{-\alpha-1}.$$
\end{lem}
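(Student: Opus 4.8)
The plan is to compare the series with a Gamma integral and then exploit the elementary inequality $-\ln(1-p)\ge p$. Write $q=1-p\in(0,1)$ and set $\lambda=-\ln q>0$, so that $q^n=\e^{-\lambda n}$ and the series in question becomes $\sum_{n=1}^\infty \e^{-\lambda n}n^\alpha$. Its continuous analogue is
$$\int_0^\infty \e^{-\lambda x}x^\alpha\diff x=\Gamma(\alpha+1)\lambda^{-(\alpha+1)},$$
obtained by the substitution $u=\lambda x$, where the integral converges at $0$ precisely because $\alpha>-1$. Thus the heart of the matter is to control the series by this integral up to a multiplicative constant, and then to convert the factor $\lambda^{-(\alpha+1)}$ into $p^{-(\alpha+1)}$.

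First I would record the key inequality $\lambda=-\ln(1-p)\ge p$, which follows from $\e^{-p}\ge 1-p$ (the same convexity estimate $\e^t\ge 1+t$ that underlies Lemma \ref{lem:app.2}). Since $\alpha+1>0$, this immediately gives $\lambda^{-(\alpha+1)}\le p^{-(\alpha+1)}$; likewise, when $\alpha\ge 0$ we have $\lambda^{-\alpha}\le p^{-\alpha}\le p^{-(\alpha+1)}$ because $0<p<1$. Consequently, any bound of the form $\sum_{n=1}^\infty \e^{-\lambda n}n^\alpha\le C'\big(\lambda^{-(\alpha+1)}+\lambda^{-\alpha}\big)$ will at once deliver the claimed estimate $Cp^{-(\alpha+1)}$ with a constant $C$ depending only on $\alpha$ and uniform in $p\in(0,1)$.

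The remaining work is the sum-to-integral comparison for $f(x)=\e^{-\lambda x}x^\alpha$. When $-1<\alpha\le 0$ the function $f$ is decreasing on $(0,\infty)$, so $\sum_{n=1}^\infty f(n)\le f(1)+\int_0^\infty f(x)\diff x\le 1+\Gamma(\alpha+1)\lambda^{-(\alpha+1)}$ and we are finished. When $\alpha>0$, however, $f$ is unimodal with a single interior maximum at $x^\ast=\alpha/\lambda$, and this is where some care is needed: on the increasing stretch I bound each term by $f(n)\le\int_n^{n+1}f(x)\diff x$, on the decreasing stretch by $f(n)\le\int_{n-1}^n f(x)\diff x$, and I absorb the one or two terms straddling the peak by the maximum value $\max f=\e^{-\alpha}(\alpha/\lambda)^\alpha$. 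Summing these estimates bounds $\sum_{n=1}^\infty f(n)$ by a constant multiple of $\lambda^{-(\alpha+1)}$ plus the peak value, i.e. by $C'\big(\lambda^{-(\alpha+1)}+\lambda^{-\alpha}\big)$, which is exactly the form required by the previous paragraph.

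I expect the unimodal case $\alpha>0$ to be the only genuine obstacle. The delicate point is to split the index set cleanly at $x^\ast$ so that the telescoping integral bounds on the two monotone pieces do not double count the term near the peak, and then to confirm that the residual peak contribution of order $\lambda^{-\alpha}$ is truly dominated by $\lambda^{-(\alpha+1)}$ after passing from $\lambda$ to $p$. Everything else—the integral evaluation, the inequality $\lambda\ge p$, and the decreasing case—is routine.
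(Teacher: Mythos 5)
Your proof is correct and follows essentially the same route as the paper's: a sum-to-integral comparison with $\int_0^\infty (1-p)^x x^\alpha\,dx=\Gamma(\alpha+1)\left(-\log(1-p)\right)^{-\alpha-1}$ combined with the inequality $-\log(1-p)\ge p$, split into the cases $-1<\alpha\le 0$ (monotone) and $\alpha>0$ (unimodal). The only difference is cosmetic: in the case $\alpha>0$ the paper bounds the increasing stretch crudely by the number of terms times the peak value, giving $(\alpha/p)^{\alpha+1}$, whereas you run the integral comparison on both monotone pieces and absorb the one or two terms at the peak separately; both yield a bound of the form $Cp^{-\alpha-1}$.
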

\begin{proof}
Let $f(x)=(1-p)^x x^\alpha$.  If $-1<\alpha\le 0$, $f(x)$ is monotone decreasing for $x\ge 0$.  Since $\log(1-p)\le -p$, we obtain
$$\sum_{n=1}^\infty (1-p)^n n^\alpha\le \int_0^\infty (1-p)^x x^\alpha\diff x=\frac{\Gamma(\alpha+1)}{(-\log(1-p))^{\alpha+1}}\le Cp^{-\alpha-1},$$
where $C=\Gamma(\alpha+1)$.  If $\alpha>0$, again using $\log(1-p)\le -p$ we have
$$f'(x)=[\alpha+\log(1-p)x](1-p)^x x^{\alpha-1}\le 0$$
for $x\ge \frac{\alpha}{p}$.  Hence we obtain
\begin{align*}
\sum_{n=1}^\infty (1-p)^n n^\alpha&=\sum_{n\le \alpha/p}^\infty (1-p)^n n^\alpha+\sum_{n> \alpha/p}^\infty (1-p)^n n^\alpha\\
&\le \left(\frac{\alpha}{p}\right)\left(\frac{\alpha}{p}\right)^\alpha+\int_0^\infty (1-p)^x x^\alpha\diff x\\
&=\left(\frac{\alpha}{p}\right)^{\alpha+1}+\frac{\Gamma(\alpha+1)}{(-\log(1-p))^{\alpha+1}}\le Cp^{-\alpha-1},
\end{align*}
where $C=\alpha^{\alpha+1}+\Gamma(\alpha+1)$.
\end{proof}

\begin{lem}\label{lem:app.5}
Let $f$ be a bounded $C^3$ function on $\R$ with bounded derivatives up to the third order.  Then there exists a constant $K$ such that for all $h\in\R$, we have
$$g(h):=\sup_{x\in \R}\abs{f(x+h)-f(x)-f'(x)h-\frac{1}{2}f''(x)h^2}\le K\min\set{h^2,\abs{h}^3}.$$
\end{lem}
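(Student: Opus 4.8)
The plan is to prove two separate uniform bounds on the second-order Taylor remainder
$$r(x,h):=f(x+h)-f(x)-f'(x)h-\frac{1}{2}f''(x)h^2$$
— one cubic in $h$, which is the sharp one for small $\abs{h}$, and one quadratic in $h$, which is the relevant one for large $\abs{h}$ — and then to glue them together using the elementary fact that $\min\set{h^2,\abs{h}^3}$ equals $\abs{h}^3$ when $\abs{h}\le 1$ and equals $h^2$ when $\abs{h}\ge 1$. Throughout write $M_k:=\sup_{x\in\R}\abs{f^{(k)}(x)}<\infty$ for $k=2,3$, which are finite by hypothesis.

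For the cubic bound I would apply Taylor's theorem with the Lagrange remainder to second order: for each $x$ there is some $\xi$ between $x$ and $x+h$ with $r(x,h)=\frac{1}{6}f'''(\xi)h^3$, whence $\abs{r(x,h)}\le \frac{M_3}{6}\abs{h}^3$ uniformly in $x$, and therefore $g(h)\le\frac{M_3}{6}\abs{h}^3$ for every $h$. For the quadratic bound the cleanest route is the integral form of the first-order remainder. Since $\int_x^{x+h}(x+h-t)\diff t=\frac{h^2}{2}$, one may rewrite
$$r(x,h)=\int_x^{x+h}(x+h-t)\left[f''(t)-f''(x)\right]\diff t,$$
and the crude estimate $\abs{f''(t)-f''(x)}\le 2M_2$ then gives $\abs{r(x,h)}\le M_2 h^2$ uniformly in $x$, so that $g(h)\le M_2 h^2$ for every $h$.

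Finally I would combine the two estimates according to the size of $\abs{h}$: for $\abs{h}\le 1$ use the cubic bound, $g(h)\le\frac{M_3}{6}\abs{h}^3=\frac{M_3}{6}\min\set{h^2,\abs{h}^3}$, and for $\abs{h}\ge 1$ use the quadratic bound, $g(h)\le M_2 h^2=M_2\min\set{h^2,\abs{h}^3}$. Setting $K:=\max\set{M_2,\frac{M_3}{6}}$ then yields $g(h)\le K\min\set{h^2,\abs{h}^3}$ for all $h\in\R$, as required.

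There is no genuine obstacle here, since both bounds are standard Taylor estimates; the only point to watch is bookkeeping. In particular, if one instead tried to establish the quadratic bound by naively applying the triangle inequality to the four terms of $r(x,h)$, the contribution $\abs{f(x+h)}+\abs{f(x)}$ is only $O(1)$ and hence \emph{not} $O(h^2)$ for small $h$; this is harmless because for small $h$ we invoke only the cubic bound, but it is precisely why using the integral representation (which produces a clean $O(h^2)$ bound valid for all $h$) is preferable and why the split at $\abs{h}=1$ matters.
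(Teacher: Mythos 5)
Your proof is correct, and it follows the same overall strategy as the paper (a cubic Taylor bound for small $|h|$, a quadratic bound for large $|h|$, glued by the definition of the minimum), but the quadratic bound is obtained by a genuinely different and somewhat cleaner device. The paper bounds the remainder for large $|h|$ by the crude triangle inequality $|r(x,h)|\le 2M_0+M_1|h|+\tfrac{M_2}{2}h^2$, which is only $O(h^2)$ once $|h|\ge b$ for some sufficiently large $b$; it then covers the intermediate range $|h|<b$ by the cubic bound ($K_1|h|^3\le K_1 b\,h^2$) and takes $K=\max\{K_1,K_2,K_1b\}$. You instead write the remainder in the integral form $r(x,h)=\int_x^{x+h}(x+h-t)\left[f''(t)-f''(x)\right]\mathrm{d}t$, which gives the uniform bound $|r(x,h)|\le M_2h^2$ valid for \emph{all} $h$, so the split can be made at $|h|=1$ and the constant is simply $\max\{M_2,M_3/6\}$. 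What your route buys is that the quadratic estimate uses only $\sup|f''|$ and not the boundedness of $f$ and $f'$ (so the hypotheses could in principle be weakened for this lemma), and it avoids the bookkeeping with the threshold $b$; what the paper's route buys is that it needs nothing beyond Taylor's theorem and the triangle inequality. Your closing remark correctly identifies why the naive triangle-inequality bound forces the split at a large $b$ rather than at $1$ --- that is exactly the wrinkle the paper's proof has to absorb. Both arguments are complete and yield the stated conclusion.
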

\begin{proof}
By assumption $M_i:=\sup_{x\in\R}\abs{f^{(i)}(x)}<\infty$ for $i=0,1,2,3$.  By Taylor's theorem for each $x,x+h\in \R$ there exists $\xi$ between $x$ and $x+h$ such that
$$f(x+h)=f(x)+f'(x)h+\frac{1}{2}f''(x)h^2+\frac{f'''(\xi)}{6}h^3.$$
Hence
$$\abs{f(x+h)-f(x)-f'(x)h-\frac{1}{2}f''(x)h^2}\le \frac{M_3}{6}\abs{h}^3=:K_1\abs{h}^3.$$
On the other hand, by the triangle inequality we get
$$\abs{f(x+h)-f(x)-f'(x)h-\frac{1}{2}f''(x)h^2}\le 2M_0+M_1\abs{h}+\frac{M_2}{2}\abs{h}^2\le K_2\abs{h}^2$$
for large enough $\abs{h}$, say $\abs{h}\ge b$.  Then for $\abs{h}<b$ we have
$$\abs{f(x+h)-f(x)-f'(x)h-\frac{1}{2}f''(x)h^2}\le K_1\abs{h}^3\le K_1b\abs{h}^2.$$
Hence by taking $K=\max\set{K_1,K_2,K_1b}$ we obtain
$$\abs{f(x+h)-f(x)-f'(x)h-\frac{1}{2}f''(x)h^2}\le K\min\set{h^2,\abs{h}^3}. $$
\end{proof}

\begin{cor}\label{cor:app.6}
Let everything be as in Lemma \ref{lem:app.5}.  Then
$$f(x+h_1)-f(x+h_2)=f'(x)(h_1-h_2)+\frac{f''(x)}{2}(h_1^2-h_2^2)+R(x,h_1,h_2),$$
where the remainder term $R$ satisfies
$$\abs{R(x,h_1,h_2)}\le g(h_1)+g(h_2)\le K\left[\min\set{h_1^2,\abs{h_1}^3}+\min\set{h_2^2,\abs{h_2}^3}\right]$$
\end{cor}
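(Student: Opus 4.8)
The plan is to reduce the statement to two applications of Lemma \ref{lem:app.5} by expanding $f(x+h_1)$ and $f(x+h_2)$ to second order about the \emph{same} base point $x$ and then subtracting. The key observation is that both expansions share the identical constant, linear, and quadratic coefficients $f(x)$, $f'(x)$, and $\frac{1}{2}f''(x)$; hence upon subtraction the $f(x)$ terms cancel and the remaining explicit terms collapse to exactly $f'(x)(h_1-h_2)+\frac{f''(x)}{2}(h_1^2-h_2^2)$, leaving a remainder that is the difference of two single-point remainders.

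First I would define, for $i=1,2$, the second-order remainder
$$R_i:=f(x+h_i)-f(x)-f'(x)h_i-\tfrac{1}{2}f''(x)h_i^2,$$
so that rearranging gives $f(x+h_i)=f(x)+f'(x)h_i+\tfrac{1}{2}f''(x)h_i^2+R_i$. Subtracting the $i=2$ identity from the $i=1$ identity then yields the claimed decomposition with $R(x,h_1,h_2)=R_1-R_2$.

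Next I would bound each remainder via Lemma \ref{lem:app.5}. Since the supremum in the definition of $g$ is taken over all $x\in\R$, it in particular dominates the fixed base point appearing here, giving $\abs{R_i}\le g(h_i)\le K\min\set{h_i^2,\abs{h_i}^3}$ for $i=1,2$. Applying the triangle inequality I would obtain
$$\abs{R(x,h_1,h_2)}=\abs{R_1-R_2}\le \abs{R_1}+\abs{R_2}\le g(h_1)+g(h_2)\le K\left[\min\set{h_1^2,\abs{h_1}^3}+\min\set{h_2^2,\abs{h_2}^3}\right],$$
which is precisely the asserted bound.

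I do not anticipate any genuine obstacle: the result is a direct algebraic consequence of the single-point estimate already established in Lemma \ref{lem:app.5}. The only point requiring care is to ensure that both Taylor expansions are anchored at the common point $x$, so that the explicit polynomial terms carry matching coefficients and the $f(x)$ contributions cancel cleanly; once this is arranged, the bound on $R$ follows from the triangle inequality and nothing further is needed.
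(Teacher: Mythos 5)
Your proof is correct and follows exactly the route the paper intends: the paper's own proof is the one-liner ``Trivial by Lemma \ref{lem:app.5} and the triangle inequality,'' and your write-up simply makes that explicit by anchoring both expansions at the common point $x$, subtracting, and bounding $R = R_1 - R_2$ via the triangle inequality.
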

\begin{proof}
Trivial by Lemma \ref{lem:app.5} and the triangle inequality.
\end{proof} 

\bibliographystyle{plain}

\end{document}